\documentclass[11pt,reqno]{amsart}
\oddsidemargin = 0cm \evensidemargin = 0cm \textwidth = 16cm

 \usepackage{amssymb,amsfonts,amscd,amsbsy, color, amsmath}
\usepackage[mathscr]{eucal}
\usepackage{url}
\usepackage{graphicx}
\usepackage{mathtools}

\newtheorem{theorem}{Theorem}[section]
\newtheorem{lemma}[theorem]{Lemma}

\newtheorem{corollary}[theorem]{Corollary}
\theoremstyle{definition}

\numberwithin{equation}{section}

\newcommand{\sgn}{\operatorname{sgn}}

\makeatletter
\def\imod#1{\allowbreak\mkern5mu({\operator@font mod}\,\,#1)}
\makeatother
\allowdisplaybreaks

\begin{document}

\title[The colored Jones polynomial for double twist knots]{The colored Jones polynomial and Kontsevich-Zagier series for double twist knots}

\author{Jeremy Lovejoy}

\author{Robert Osburn}

\address{CNRS, Universit{\'e} de Paris, Case 7014, 75205 Paris Cedex 13, FRANCE}

\address{School of Mathematics and Statistics, University College Dublin, Belfield, Dublin 4, Ireland}

\email{lovejoy@math.cnrs.fr}

\email{robert.osburn@ucd.ie}

\subjclass[2010]{57M27}
\keywords{double twist knots, colored Jones polynomial, duality}

\dedicatory{In memory of Toshie Takata}

\date{\today}

\begin{abstract}
Using a result of Takata, we prove a formula for the colored Jones polynomial of the double twist knots $K_{(-m,-p)}$ and $K_{(-m,p)}$ where $m$ and $p$ are positive integers. In the $(-m,-p)$ case, this leads to new families of $q$-hypergeometric series generalizing the Kontsevich-Zagier series. Comparing with the cyclotomic expansion of the colored Jones polynomials of $K_{(m,p)}$ gives a generalization of a duality at roots of unity between the Kontsevich-Zagier function and the generating function for strongly unimodal sequences.     
\end{abstract}

\maketitle

\section{Introduction}
Let $K$ be a knot and $J_{N}(K; q)$ be the usual $N$th colored Jones polynomial, normalized to be $1$ for the unknot. It is often useful to determine a $q$-hypergeometric expression for $J_{N}(K; q)$, and such expressions have been computed for a variety of knots (e.g., \cite{habiro, hikami1, hikami2, thang, masbaum, walsh}).  These have applications to the AJ conjecture \cite{hikami1}, the generalized Volume conjecture \cite{gukov, hikami3}, the computation of WRT invariants \cite{bhl, hikami4} and quantum modular and mock modular forms \cite{folsom, go, hl1, hl2, zagier}.  

Consider the family of double twist knots $K_{(m,p)}$, where $2m$ and $2p$ are nonzero integers denoting the number of half-twists in each respective region of Figure \ref{fig:dt}. Positive integers correspond to right-handed half-twists and negative integers correspond to left-handed half-twists.

\begin{figure}[h!]
\includegraphics[width=7.5cm, height=5.0cm]{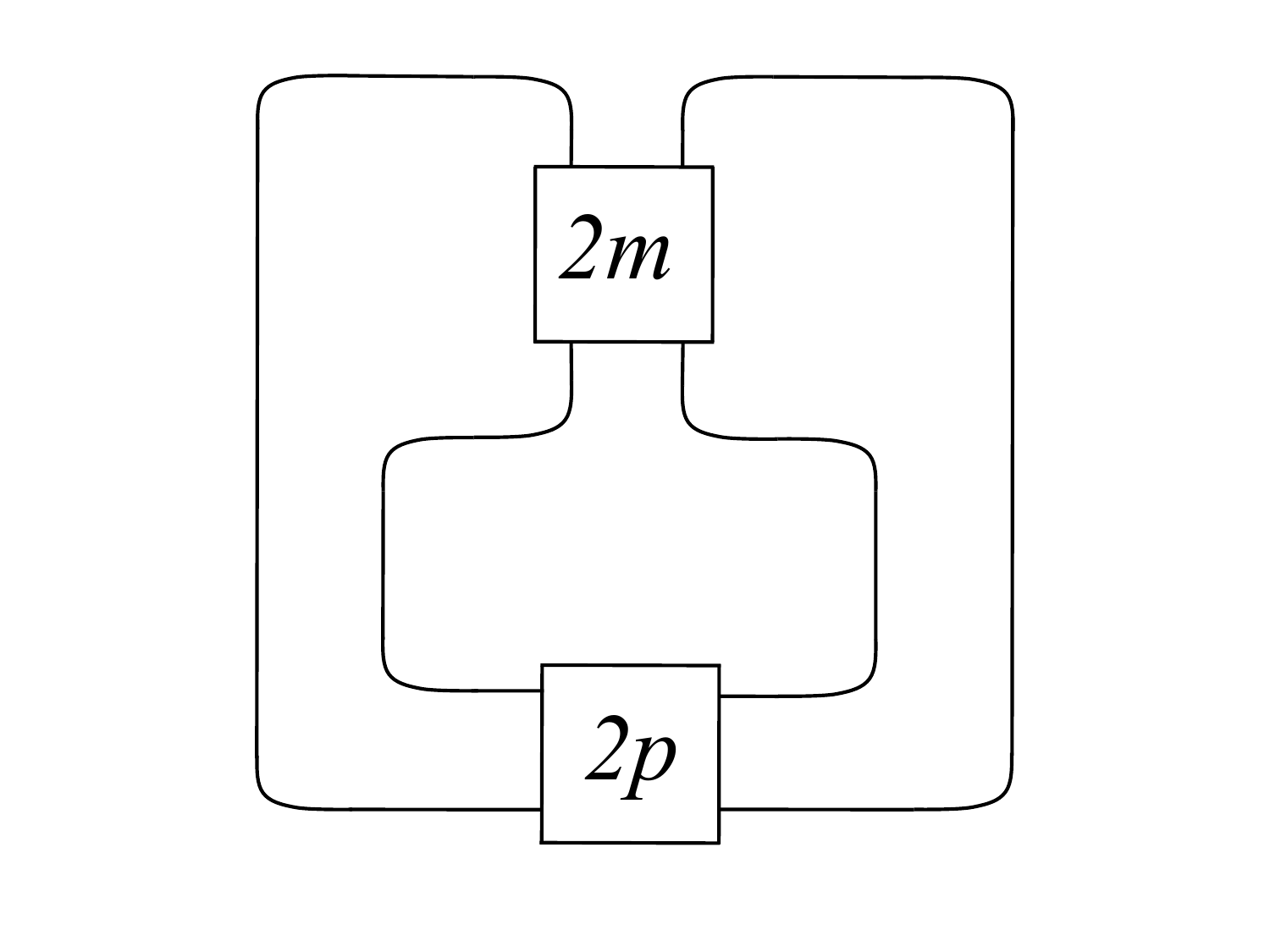}
\caption{Double twist knots}
\label{fig:dt}
\end{figure}

Using ideas of Habiro \cite{habiro} and Masbaum \cite{masbaum}, Lauridsen \cite[Section 6]{La} gave the \emph{cyclotomic expansion} of $J_N(K_{(m,p)};q)$ for all nonzero $m$ and $p$ (cf. \cite{gk,walsh}). For example, when $m$ and $p$ are positive integers we have
\begin{equation} \label{cyclotomicmp}
J_N(K_{(m,p)}; q) = \sum_{\substack{n \geq 0 \\ n =n_m \geq n_{m-1} \geq \cdots \geq n_1 \geq 0 \\ n = s_p \geq s_{p-1} \geq \cdots \geq s_1 \geq 0}}
(q^{1+N})_{n} (q^{1-N})_n q^n \prod_{i=1}^{m-1} q^{n_i^2+n_i} \begin{bmatrix} n_{i+1} \\ n_i \end{bmatrix} \prod_{j=1}^{p-1} q^{s_j^2+s_j} \begin{bmatrix} s_{j+1} \\ s_j \end{bmatrix}
\end{equation}
and 
\begin{equation} \label{cyclotomicm-p}
\begin{aligned}
& J_N(K_{(m,-p)}; q) \\
& = \sum_{\substack{n \geq 0 \\ n = n_m \geq n_{m-1} \geq \cdots \geq n_1 \geq 0 \\ n = s_p \geq s_{p-1} \geq \cdots \geq s_1 \geq 0}}
(q^{1+N})_{n} (q^{1-N})_n (-1)^nq^{-\binom{n+1}{2}} \prod_{i=1}^{m-1} q^{n_i^2+n_i} \begin{bmatrix} n_{i+1} \\ n_i \end{bmatrix} \prod_{j=1}^{p-1} q^{-s_j - s_{j+1}s_j} \begin{bmatrix} s_{j+1} \\ s_j \end{bmatrix}.
\end{aligned}
\end{equation}
\noindent Here we have used the usual $q$-binomial coefficient

\begin{equation}
\begin{bmatrix} n \\ k \end{bmatrix} = \begin{bmatrix} n \\ k \end{bmatrix}_{q} := \frac{(q)_n}{(q)_{n-k} (q)_k}
\end{equation}
with the standard $q$-hypergeometric notation

\begin{equation*}
(a)_n = (a;q)_n := \prod_{k=0}^{n-1} (1-aq^{k}).
\end{equation*}

\noindent Recall that
\begin{equation} \label{mirror}
J_{N}(K; q^{-1}) = J_{N}(K^{*}; q),
\end{equation}
where $K^*$ denotes the mirror image of the knot $K$.   Thus, since $K_{(m,p)} = K_{(p,m)}$ and $K_{(-m,-p)}$ is the mirror image of $K_{(m,p)}$, equations \eqref{cyclotomicmp} and \eqref{cyclotomicm-p} cover all of the double twist knots, up to a substitution of $q$ by $q^{-1}$.   

In this paper, we use a result of Takata \cite{takata} to prove $q$-hypergeometric formulas for the colored Jones polynomial of the double twist knots $K_{(-m,-p)}$ and $K_{(-m,p)}$ which are different from those corresponding to \eqref{cyclotomicmp} and \eqref{cyclotomicm-p}.    To state the first case, define the functions $\epsilon_{i,j,m}$ and $\gamma_{i,m}$ by
\begin{equation} \label{epsilondef}
\epsilon_{i,j,m} = 
\begin{cases}
1, &\text{if $j \equiv -i$ or $-i-1$ $\pmod{2m}$}, \\
-1, &\text{if $j \equiv i$ or $i-1$ $\pmod{2m}$}, \\
0, &\text{otherwise}
\end{cases}
\end{equation}
where $1 \leq i < j \leq 2mp-1$ with $m \nmid i$ and
\begin{equation} \label{gammadef}
\gamma_{i,m} =
\begin{cases}
1, &\text{if $i \equiv 1,\dots, m-1 \pmod{2m}$}, \\
-1 &\text{otherwise}
\end{cases}
\end{equation}
where $1 \leq i \leq 2mp-2$. Our first main result is the following.

\begin{theorem} \label{main1}
For positive integers $m$ and $p$, we have
\begin{equation} \label{JNK-m-p}
\begin{aligned}
J&_{N}(K_{(-m,-p)}; q) \\ 
&= q^{1-N}\sum_{N-1 \geq n_{2mp-1} \geq \cdots \geq n_1 \geq 0} (q^{1-N})_{n_{2mp-1}} (-1)^{n_{2mp-1}} q^{-\binom{n_{2mp-1} + 1}{2}} \prod_{\substack{1 \leq i < j \leq 2mp-1 \\ m \nmid i}} q^{\epsilon_{i,j,m}n_in_j} \\
&  \qquad \qquad \qquad \qquad \times \prod_{i=1}^{2p-1} (-1)^{n_{mi}} q^{(-1)^{i} N n_{mi} + \binom{n_{mi} + 1}{2}} \prod_{i=1}^{2mp - 2} q^{-n_i n_{i+1} + \gamma_{i, m} n_i}\begin{bmatrix} n_{i+1} \\ n_{i} \end{bmatrix}. 
\end{aligned}
\end{equation}
\end{theorem}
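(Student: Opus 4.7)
\emph{Proof proposal.} The plan is to start from Takata's formula \cite{takata} for the colored Jones polynomial of $K_{(-m,-p)}$ and transform it via $q$-hypergeometric manipulations into the nested-sum form \eqref{JNK-m-p}. Takata's formula expresses $J_N(K_{(-m,-p)};q)$ as a multi-sum on the order of $2mp-1$ summation variables, but with a summation structure different from the monotone chain with $q$-binomial coefficients in \eqref{JNK-m-p}. The first step is to rewrite Takata's sum as a sum over nested chains $0 \leq n_1 \leq n_2 \leq \cdots \leq n_{2mp-1} \leq N-1$, producing the $q$-binomial coefficients $\begin{bmatrix} n_{i+1}\\ n_i\end{bmatrix}$, for instance by iteratively applying the $q$-Vandermonde identity or by a direct $q$-binomial expansion of the relevant differences between consecutive indices.

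The second step is to identify and match the exponent of $q$ (and the sign) in the resulting sum. The quadratic form in $(n_1,\dots,n_{2mp-1})$ should decompose into a bilinear part $\sum \epsilon_{i,j,m} n_i n_j$ and a linear part involving $\gamma_{i,m} n_i$, together with the $N$-dependent contributions $(-1)^i N n_{mi}$ at the indices $i = m, 2m, \dots, (2p-1)m$. The periodicity modulo $2m$ in \eqref{epsilondef} and \eqref{gammadef} reflects the $p$-fold repetition of the basic $-2m$-twist block of the knot, while the special role of indices $i$ with $m \mid i$ corresponds to the transitions between consecutive copies of this block. A natural route is an induction on $p$: handle the base case $p=1$ directly, then adjoin one block at a time so that the periodic pattern in $\epsilon_{i,j,m}$ and $\gamma_{i,m}$ is built up systematically.

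The principal obstacle will be the case analysis in matching the quadratic-form coefficients to the piecewise definitions \eqref{epsilondef} and \eqref{gammadef}. One must verify that for $1 \leq i < j \leq 2mp-1$ with $m \nmid i$ the coefficient of $n_i n_j$ is exactly $+1$ when $j \equiv -i, -i-1 \pmod{2m}$, exactly $-1$ when $j \equiv i, i-1 \pmod{2m}$, and $0$ otherwise, together with a parallel check for the linear coefficients, the $(-1)^{n_{mi}}$ signs, and the $\binom{n_{im}+1}{2}$ contributions at the break points $i = mk$. Once this bookkeeping is under control, the factor $(q^{1-N})_{n_{2mp-1}}(-1)^{n_{2mp-1}}q^{-\binom{n_{2mp-1}+1}{2}}$ and the outer $q^{1-N}$ should emerge from the outermost summation and the initial normalization in Takata's formula.
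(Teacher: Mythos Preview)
Your plan rests on a misconception about Takata's formula. The sum in Theorem~\ref{takata} already runs over the monotone chain $N-1 \geq n_{p'} \geq \cdots \geq n_1 \geq 0$, and the factor $(q)_{n_{p'}}\prod_{j}(q)_{n_j-n_{j-1}}^{-1}$ in $X(\underline{n})$ is precisely the product of $q$-binomial coefficients $\prod_i \begin{bmatrix} n_{i+1}\\n_i\end{bmatrix}$. So your ``first step''---introducing the chain structure via $q$-Vandermonde or $q$-binomial expansions---is unnecessary: the nested-sum shape of \eqref{JNK-m-p} is present from the outset. The prefactor $(q^{1-N})_{n_{2mp-1}}$ and the outer $q^{1-N}$ likewise fall out immediately from $X(\underline{n})$ and the constant term of $a(\underline{n})$.

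All of the actual content is in your second step, and here the paper takes a route different from the one you sketch. There is no induction on $p$. Instead, one applies Lemmas~\ref{l8} and~\ref{l9} to evaluate the pieces of Takata's exponent $a(\underline{n})$, $b_1(\underline{n})+b_2(\underline{n})$, and $X(\underline{n})$ in closed form for $l=4mp-1$, $t=4mp-2p-1$; after peeling off the parts that match \eqref{JNK-m-p} directly (the signs, the $N$-dependent terms, the $\binom{n_{im}+1}{2}$ pieces, and the linear $\gamma_{i,m}n_i$ terms via identities like \eqref{s1}--\eqref{s3}), the problem reduces to showing that one explicit polynomial in the $n_i$ equals $\sum \epsilon_{i,j,m}n_in_j - \sum n_in_{i+1}$. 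This last equality is then established by a sequence of telescoping simplifications and reindexings over several pages, ultimately rewriting the expression in the form \eqref{newlhs2} where the four cases of $\epsilon_{i,j,m}$ can be read off line by line. Your proposal correctly anticipates that this bookkeeping is the crux, but it is substantially heavier than ``case analysis'': the manipulations from \eqref{lhs} to \eqref{newlhs2} are the proof.
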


The case $m=1$ of Theorem \ref{main1} was proved by Takata \cite{takata}\footnote{Note that Takata's $q$-hypergeometric notation differs slightly from the standard one we use here}.   Here $K_{(1,p)}=K_{p>0}$, the usual $p$th twist knot.  As $K_{(-1,-p)}$ is the mirror image of $K_{(1,p)}$, one recovers
$J_{N}(K_{p>0}^{*}; q)$ by taking $m=1$ in \eqref{JNK-m-p} and simplifying. Namely, we have (cf. equation (15) in \cite{takata})

\begin{equation} \label{kppos}
\begin{aligned}
J_{N}&(K_{p>0}^{*}; q) \\
& = q^{1-N}\sum_{N-1 \geq n_{2p-1} \geq \cdots \geq n_1 \geq 0} (q^{1-N})_{n_{2p-1}}  q^{-Nn_{2p-1}} \prod_{i=1}^{2p-2} (-1)^{n_{i}} q^{(-1)^{i} N n_{i} + \binom{n_{i}}{2} -n_i n_{i+1}} \begin{bmatrix} n_{i+1} \\ n_{i} \end{bmatrix}. 
\end{aligned}
\end{equation}

\noindent For an example with $m > 1$, take $m=p=2$.  Then we have
\begin{equation}
\begin{aligned}
J_N&(K_{(-2,-2)};q) \\
 &= q^{1-N} \sum_{N-1 \geq n_7 \geq n_6 \geq n_5 \geq n_4 \geq n_3 \geq n_2 \geq n_1 \geq 0}  (q^{1-N})_{n_7} (-1)^{n_2+n_4+n_6+n_7}q^{N(-n_2+n_4-n_6)} \\
&\qquad \qquad \qquad \times q^{-\binom{n_7+1}{2} + \binom{n_6+1}{2} + \binom{n_4+1}{2} + \binom{n_2+1}{2} +n_1(n_3-n_4-n_5+n_6+n_7) + n_3(n_5-n_6-n_7) +  n_5n_7} \\
& \qquad \qquad \qquad \qquad \times q^{-n_2n_3 -n_4n_5 - n_6n_7 + n_1 - n_2 -n_3 - n_4 + n_5 - n_6}\begin{bmatrix} n_7 \\ n_6 \end{bmatrix} \begin{bmatrix} n_6 \\ n_5 \end{bmatrix} \begin{bmatrix} n_5 \\ n_4 \end{bmatrix} \begin{bmatrix} n_4 \\ n_3\end{bmatrix}  \begin{bmatrix} n_3 \\ n_2 \end{bmatrix} \begin{bmatrix} n_2 \\ n_1 \end{bmatrix}.
\end{aligned}
\end{equation}

For the case of $K_{(-m,p)}$, define the functions $\Delta_{i,j,m}$ and $\beta_{i,m}$ by
\begin{equation} \label{Deltadef}
\Delta_{i,j,m} = 
\begin{cases}
1, &\text{if $j \equiv -i$ or $-i+1$ $\pmod{2m}$}, \\
-1, &\text{if $j \equiv i$ or $i+1$ $\pmod{2m}$}, \\
0, &\text{otherwise}
\end{cases}
\end{equation}
where $1 \leq i < j \leq 2mp$ with $m \nmid i$ and
\begin{equation} \label{betadef}
\beta_{i,m} =
\begin{cases}
1, &\text{if $i \equiv 1,\dots, m-1 \pmod{2m}$}, \\
-1 &\text{if $i \equiv m+1,\dots, 2m-1 \pmod{2m}$}, \\
0, &\text{otherwise} 
\end{cases}
\end{equation}
where $1 \leq i \leq 2mp-1$. Our second main result is the following.

\begin{theorem} \label{main2}
For positive integers $m$ and $p$, we have
\begin{equation} \label{JNK-mp}
\begin{aligned}
J_{N}&(K_{(-m,p)}; q) \\ 
&= \sum_{N-1 \geq n_{2mp} \geq \cdots \geq n_1 \geq 0} (q^{1-N})_{n_{2mp}}(-1)^{n_{2mp}} q^{-\binom{n_{2mp} + 1}{2}} \prod_{\substack{1 \leq i < j \leq 2mp \\ m \nmid i}} q^{\Delta_{i,j,m} n_i n_j} \\
& \qquad \qquad \qquad \qquad \times \prod_{i=1}^{2p-1} (-1)^{n_{mi}} q^{(-1)^{i+1} N n_{mi} + \binom{n_{mi} + 1}{2}} \prod_{i=1}^{2mp - 1} q^{\beta_{i,m} n_i}\begin{bmatrix} n_{i+1} \\ n_{i} \end{bmatrix}.
\end{aligned}
\end{equation}
\end{theorem}

Again, the case $m=1$ was established by Takata \cite{takata}.    As $K_{(1,-p)} = K_{p<0}$ and $K_{(-1,p)}$ is the mirror image of $K_{(1,-p)}$, one recovers $J_{N}(K_{p<0}^{*}; q)$ by taking $m=1$ in (\ref{JNK-mp}) and simplifying.  Namely, we have (cf. equation (14) in \cite{takata})

\begin{equation} \label{kpneg}
\begin{aligned}
J_{N}&(K_{p<0}^{*}; q) \\
& = \sum_{N-1 \geq n_{2p} \geq \cdots \geq n_1 \geq 0} (q^{1-N})_{n_{2p}} (-1)^{n_{2p}} q^{-\binom{n_{2p} + 1}{2}} \prod_{i=1}^{2p-1} (-1)^{n_{i}} q^{(-1)^{i+1} N n_{i} + \binom{n_{i} + 1}{2}} \begin{bmatrix} n_{i+1} \\ n_{i} \end{bmatrix}. 
\end{aligned}
\end{equation}

\noindent For an example with $m > 1$, take $m=3$ and $p=1$.    Then we have
\begin{equation}
\begin{aligned}
J_N&(K_{(-3,1)};q)  \\
&= \sum_{N-1 \geq n_6 \geq n_5 \geq n_4 \geq n_3 \geq n_2 \geq n_1 \geq 0} (q^{1-N})_{n_6}(-1)^{n_3+n_6}q^{Nn_3 -\binom{n_6+1}{2} + \binom{n_3+1}{2}} \\
&\qquad \qquad \qquad \qquad \times q^{n_1(-n_2 + n_5+n_6) + n_2(-n_3+n_4+n_5)  - n_4n_5 - n_5n_6} \\
& \qquad \qquad \qquad \qquad \qquad \times q^{n_1 +n_2 - n_4 -n_5} \begin{bmatrix} n_6 \\ n_5 \end{bmatrix} \begin{bmatrix} n_5 \\ n_4 \end{bmatrix} \begin{bmatrix} n_4 \\ n_3\end{bmatrix}  \begin{bmatrix} n_3 \\ n_2 \end{bmatrix} \begin{bmatrix} n_2 \\ n_1 \end{bmatrix}.
\end{aligned}
\end{equation}

Motivated by the expressions in (\ref{JNK-m-p}) and \eqref{cyclotomicmp}, we define the $q$-series $F_{m,p}(q)$ and $U_{m,p}(x;q)$ by

\begin{equation} \label{Fmp}
\begin{aligned}
F_{m,p}(q) &= q\sum_{n_{2mp-1} \geq \cdots \geq n_1 \geq 0} (q)_{n_{2mp-1}} (-1)^{n_{2mp-1}} q^{-\binom{n_{2mp-1} + 1}{2}} \prod_{\substack{1 \leq i < j \leq 2mp-1 \\ m \nmid i}} q^{\epsilon_{i,j,m}n_in_j} \\
&  \qquad \qquad \qquad \qquad \times \prod_{i=1}^{2p-1} (-1)^{n_{mi}} q^{\binom{n_{mi} + 1}{2}} \prod_{i=1}^{2mp - 2} q^{-n_i n_{i+1} + \gamma_{i, m} n_i}\begin{bmatrix} n_{i+1} \\ n_{i} \end{bmatrix}
\end{aligned}
\end{equation}
and
\begin{equation} \label{Ump}
U_{m,p}(x; q) = \sum_{\substack{n \geq 0 \\ n =n_m \geq n_{m-1} \geq \cdots \geq n_1 \geq 0 \\ n =s_p \geq s_{p-1} \geq \cdots \geq s_1 \geq 0}}
(-xq)_{n} (-x^{-1}q)_n q^n \prod_{i=1}^{m-1} q^{n_i^2+n_i} \begin{bmatrix} n_{i+1} \\ n_i \end{bmatrix} \prod_{j=1}^{p-1} q^{s_j^2+s_j} \begin{bmatrix} s_{j+1} \\ s_j \end{bmatrix}.
\end{equation}

\noindent  Note that if $\zeta_N$ is \emph{any} $N$th root of unity, we have 
\begin{equation} \label{Fatroot}
F_{m,p}(\zeta_N)  = J_N(K_{(-m,-p)};\zeta_N)
\end{equation}
and
\begin{equation} \label{Uatroot}
U_{m,p}(-1; \zeta_N) = J_N(K_{(m,p)};\zeta_N).
\end{equation}

The base case $F_{1,1}(q)$ is $q$ times the Kontsevich-Zagier series, 
\begin{equation} \
F(q) := \sum_{n \geq 0} (q)_n,
\end{equation}
one of the foundational examples of Zagier's quantum modular forms \cite{z-1,zagier}, while
\begin{equation}
U_{1,1}(x;q) = q^{-1}U(x;q),
\end{equation}
where  $U(x;q)$ is the generating function for strongly unimodal sequences \cite{bopr},
\begin{equation} \label{u}
U(x;q) :=\sum_{n=0}^{\infty} (-xq)_n (-x^{-1}q)_n q^{n+1}.
\end{equation}
For this reason, we call the series $F_{m,p}(q)$ the \emph{generalized Kontsevich-Zagier functions for double twist knots} and the $U_{m,p}(x;q)$ the \emph{generalized $U$-functions for double twist knots}. Note that while $U_{m,p}(x;q)$ is well-defined  for $|q| <1$ and for $q$ a root of unity when $x=-1$, the generalized Kontsevich-Zagier functions $F_{m,p}(q)$ are only defined at roots of unity. 

The original Kontsevich-Zagier series $F(q)$ and the generating function for strongly unimodal sequences $U(x;q)$ when $x=-1$ are \emph{dual} at roots of unity via
\begin{equation} \label{fu}
F(\zeta_N^{-1}) = U(-1; \zeta_N).
\end{equation}
This was first shown by Bryson, Ono, Pitman and Rhoades \cite{bopr}.   It also
follows at once from \eqref{mirror} and the case $m=p=1$ of \eqref{Fatroot} and \eqref{Uatroot}, which was first observed in \cite{hl1}. Using \eqref{mirror} and the general case of \eqref{Fatroot} and \eqref{Uatroot}, we immediately have the following generalization of \eqref{fu}.

\begin{corollary} \label{fugeneralthm} If $\zeta_N$ is any $N$th root of unity, then we have
\begin{equation} \label{fugeneraleq}
F_{m,p}(\zeta_N) = U_{m,p}(-1; \zeta_N^{-1}).
\end{equation} 
\end{corollary}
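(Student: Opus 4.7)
The statement is essentially a bookkeeping consequence of the three ingredients already laid out in the introduction, and the plan is simply to chain them together. First I would rewrite the left-hand side using \eqref{Fatroot}, giving
\[
F_{m,p}(\zeta_N) = J_N(K_{(-m,-p)};\zeta_N).
\]
Next I would invoke the topological identity $K_{(-m,-p)} = K_{(m,p)}^{*}$ (noted just below \eqref{mirror}) together with the mirror relation \eqref{mirror} itself, which says $J_N(K;q^{-1}) = J_N(K^{*};q)$. Applied with $K = K_{(m,p)}$ and specialized to $q = \zeta_N^{-1}$, this yields
\[
J_N(K_{(-m,-p)};\zeta_N) = J_N(K_{(m,p)}^{*};\zeta_N) = J_N(K_{(m,p)};\zeta_N^{-1}).
\]

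Finally, I would apply \eqref{Uatroot} at the root of unity $\zeta_N^{-1}$ (which is itself an $N$th root of unity, so the statement applies) to convert the right-hand side into a value of the $U$-function:
\[
J_N(K_{(m,p)};\zeta_N^{-1}) = U_{m,p}(-1;\zeta_N^{-1}).
\]
Concatenating the three equalities produces \eqref{fugeneraleq}.

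There is no real obstacle here; the work has all been done earlier. The one tiny point to verify is just that the specialization $x=-1$ in \eqref{Uatroot} is what one actually needs, and that both $\zeta_N$ and $\zeta_N^{-1}$ are allowed on the same footing in \eqref{Fatroot} and \eqref{Uatroot} — both are immediate from the definitions \eqref{Fmp} and \eqref{Ump} and the hypothesis that $\zeta_N$ is an arbitrary $N$th root of unity. The $m=p=1$ case recovers \eqref{fu}, exactly as expected.
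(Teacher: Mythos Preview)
Your argument is correct and is exactly the approach the paper indicates: the corollary is stated to follow ``immediately'' from \eqref{mirror}, \eqref{Fatroot}, and \eqref{Uatroot}, and you have simply spelled out that chain of equalities. (A tiny quibble: the mirror relation is applied with $q=\zeta_N$, not $q=\zeta_N^{-1}$, to obtain $J_N(K_{(m,p)}^{*};\zeta_N)=J_N(K_{(m,p)};\zeta_N^{-1})$, but your displayed equations are all correct.)
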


Next we turn our attention to \eqref{JNK-mp} and \eqref{cyclotomicm-p}.   Motivated by these expressions, we define $\mathcal{F}_{m,p}(q)$ and $\mathcal{U}_{m,p}(x; q)$ by
\begin{equation} \label{calFmp}
\begin{aligned}
\mathcal{F}_{m,p} (q)
&= \sum_{n_{2mp} \geq \cdots \geq n_1 \geq 0} (q)_{n_{2mp}}(-1)^{n_{2mp}} q^{-\binom{n_{2mp} + 1}{2}} \prod_{\substack{1 \leq i < j \leq 2mp \\ m \nmid i}} q^{\Delta_{i,j,m} n_i n_j} \\
& \qquad \qquad \qquad \qquad \times \prod_{i=1}^{2p-1} (-1)^{n_{mi}} q^{\binom{n_{mi} + 1}{2}} \prod_{i=1}^{2mp - 1} q^{\beta_{i,m} n_i}\begin{bmatrix} n_{i+1} \\ n_{i} \end{bmatrix}
\end{aligned}
\end{equation}
and
\begin{equation} \label{calUmp}
\begin{aligned}
& \mathcal{U}_{m,p}(x; q) \\
& = \sum_{\substack{n \geq 0 \\ n = n_m \geq n_{m-1} \geq \cdots \geq n_1 \geq 0 \\ n =s_p \geq s_{p-1} \geq \cdots \geq s_1 \geq 0}}
(-xq)_{n} (-x^{-1}q)_n (-1)^n q^{-\binom{n+1}{2}} \prod_{i=1}^{m-1} q^{n_i^2+n_i} \begin{bmatrix} n_{i+1} \\ n_i \end{bmatrix} \prod_{j=1}^{p-1} q^{-s_j - s_{j+1}s_j} \begin{bmatrix} s_{j+1} \\ s_j \end{bmatrix}.
\end{aligned}
\end{equation}
 Neither $\mathcal{F}_{m,p}(q)$ nor $\mathcal{U}_{m,p}(x; q)$ is defined anywhere except at roots of unity. Note that
\begin{equation} \label{calFatroot}
\mathcal{F}_{m,p}(\zeta_N)  = J_N(K_{(-m,p)};\zeta_N) 
\end{equation}
and
\begin{equation} \label{calUatroot}
\mathcal{U}_{m,p}(-1; \zeta_N) = J_N(K_{(m,-p)};\zeta_N).
\end{equation}
As a result of \eqref{mirror}, \eqref{calFatroot} and \eqref{calUatroot}, we immediately obtain the following duality at roots of unity.

\begin{corollary} \label{calFcalU} If $\zeta_N$ is any $N$th root of unity, then we have
\begin{equation}
\mathcal{F}_{m,p}(\zeta_N) = \mathcal{U}_{m,p}(-1; \zeta_N^{-1}).
\end{equation}
\end{corollary}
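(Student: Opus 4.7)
The plan is to deduce Corollary \ref{calFcalU} as an immediate formal consequence of the mirror symmetry \eqref{mirror} combined with the two $q$-hypergeometric evaluations \eqref{calFatroot} and \eqref{calUatroot} at roots of unity; this mirrors exactly how Corollary \ref{fugeneralthm} is obtained.

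The single topological input is that $K_{(-m,p)}$ is the mirror image of $K_{(m,-p)}$, since negating the signs of the half-twist counts in both regions reflects the diagram (the analogue, with one sign flipped, of the fact used earlier that $K_{(-m,-p)} = K_{(m,p)}^{*}$). Applying \eqref{mirror} with $K = K_{(m,-p)}$ and specializing $q$ to an arbitrary $N$th root of unity $\zeta_N$ then yields
\begin{equation*}
J_N(K_{(-m,p)};\zeta_N) \;=\; J_N(K_{(m,-p)};\zeta_N^{-1}).
\end{equation*}
Now \eqref{calFatroot} rewrites the left-hand side as $\mathcal{F}_{m,p}(\zeta_N)$, while \eqref{calUatroot}, applied with $\zeta_N$ replaced by the $N$th root of unity $\zeta_N^{-1}$, rewrites the right-hand side as $\mathcal{U}_{m,p}(-1;\zeta_N^{-1})$. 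Chaining these three equalities gives the claimed identity.

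Since all the substantive work is already done---Theorem \ref{main2} establishes \eqref{calFatroot}, and the cyclotomic expansion \eqref{cyclotomicm-p} together with the specialization $x=-1$ in \eqref{calUmp} establishes \eqref{calUatroot}---there is essentially no obstacle at this stage. The only point worth checking is that $\mathcal{U}_{m,p}(-1;\zeta_N^{-1})$ is meaningful even though $\mathcal{U}_{m,p}(x;q)$ is only defined for $|q|<1$ in general: at $x=-1$ the factor $(-xq)_n(-x^{-1}q)_n$ becomes $(q)_n^2$, which vanishes at $q=\zeta_N^{-1}$ as soon as $n\ge N$, so the sum automatically truncates and matches term by term the finite sum in \eqref{cyclotomicm-p} for $J_N(K_{(m,-p)};\zeta_N^{-1})$.
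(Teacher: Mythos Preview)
Your proof is correct and matches the paper's own argument exactly: the corollary is stated in the paper as an immediate consequence of \eqref{mirror}, \eqref{calFatroot} and \eqref{calUatroot}, which is precisely what you have written out. One tiny slip: you say $\mathcal{U}_{m,p}(x;q)$ is ``only defined for $|q|<1$ in general,'' but the $q^{-\binom{n+1}{2}}$ factor actually prevents convergence there as well (the paper notes it is defined only at roots of unity); your truncation argument at $x=-1$ is nonetheless the correct justification and the proof stands.
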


We expect (\ref{JNK-m-p}), (\ref{JNK-mp}) and Theorems 1.1 and 1.2 in \cite{lo2} to be of considerable interest to those working at the overlap of number theory, quantum topology, combinatorics and physics. Recently, Park \cite{park} used (\ref{JNK-m-p}) and (\ref{JNK-mp}) to check conjectures of Gukov and Manolescu \cite{gm} concerning the existence of a two-variable invariant (with origins in string theory) for knot complements. In \cite{lovejoyqqid}, the first author initiated the study of quantum $q$-series identities with Corollary \ref{fugeneralthm} as a key illustration. Furthermore, we suspect that the generalized Kontsevich-Zagier series $F_{m,p}(q)$ given by (\ref{Fmp}) are quantum modular forms. 

The paper is organized as follows. In Section 2, we recall Takata's main theorem and provide some preliminaries. In Section 3, we prove Theorems \ref{main1} and \ref{main2}.  We conclude in Section 4 with a discussion of the generalized Kontsevich-Zagier functions and the generalized $U$-functions for the torus knots $T_{(2,2t+1)}$, along with some related questions.

\section{Preliminaries}

We begin by recalling the setup from \cite{takata}. Let $l$ and $t$ be coprime odd integers with $l > t \geq 1$ and $p^{\prime} := \frac{l-1}{2}$. For $1 \leq j \leq p^{\prime}$, define integers $r(j)$ such that $r(j) \equiv (2j-1)t \pmod{2l}$ and $-l < r(j) < l$. We put $\sigma_j := (-1)^{\lfloor \frac{(2j-1)t}{l}\rfloor}$, $r^{\prime}(j) := \frac{\mid r(j) \mid + 1}{2}$ and $i_{r^{\prime}(j)}=j$ (and thus $i_k = j$ if and only if $r^{\prime}(j)=k$). For an integer $i$, sgn($i$) denotes the sign of $i$. Let $\underline{n} = (n_1, \dotsc, n_{p^{\prime}})$ and $n_s=0$ for $s \leq 0$. Finally, define

\begin{equation} \label{kappa}
\kappa(p^{\prime})= \left\{
\begin{array}{ll}
-Nn_{p^{\prime}} & \text{if $\sigma_{p^{\prime}}=-1$,} \\
0 & \text{if $\sigma_{p^{\prime}}=1$} \\
\end{array} \right. \\
\end{equation}

\noindent and

\begin{equation} \label{tau}
\tau(j) = \left\{
\begin{array}{ll}
(-1)^{n_j - n_{j-1}} & \text{if $\sigma_j=-1$,} \\
q^{\binom{n_j - n_{j-1} + 1}{2}} & \text{if $\sigma_{j}=1$.} \\
\end{array} \right. \\
\end{equation}

Consider the family of 2-bridge knots $\frak{b}(l,t)$ \cite{bz}. The main result in \cite{takata} is an explicit formula for the colored Jones polynomial of $\frak{b}(l,t)^{*}$.

\begin{theorem} \label{takata} We have
\begin{equation} \label{spt}
J_N(\frak{b}(l,t)^{*}; q) = \sum_{N-1 \geq n_{p^{\prime}} \geq \dotsc \geq n_1 \geq 0} q^{a({\underline{n}})N + b_1(\underline{n}) + b_2(\underline{n})} X(\underline{n})
\end{equation}
where\footnote{Note that there is a misprint in the definition of $X(\underline{n})$ in \cite{takata}.   Each $\overline{q}$ in the prefactor should be $q$.}
\begin{equation*}
\begin{aligned}
a({\underline{n}}) & = -\frac{1}{2} \sum_{j=1}^{p^{\prime}} \Biggl( \sum_{k=r^{\prime}(j)}^{p^{\prime}} (\sigma_{i_k} + \sigma_{i_{p^{\prime} + 1 -k}})\Biggr) (n_j - n_{j-1}) - \frac{1}{2} \sum_{j=1}^{p^{\prime} - 1} (\sigma_{j+1} + \sigma_{p^{\prime} + 1 - j}) n_j \\
& - \frac{1}{2} (\sigma_{p^{\prime}} + 1) n_{p^{\prime}} - \sum_{j=1}^{p^{\prime}} \sigma_j, \\
b_1(\underline{n}) & = - a({\underline{n}}) + \sum_{k=1}^{\frac{l-t}{2}} \frac{1-\sigma_{i_k}}{2} n_{i_k - 1} - \sum_{k=\frac{l-t}{2}+1}^{p^{\prime}} n_{i_k - 1} + \sum_{k=\frac{l-t}{2}+1}^{p^{\prime}} \frac{1+\sigma_{i_k}}{2} n_{i_k} - (1+\sigma_{p^{\prime}}) n_{p^{\prime}} \\
& + \frac{1}{2} \sum_{j=1}^{p^{\prime} - 1} (\sigma_{j+1} - \sigma_j) n_j \\
& - \frac{1}{2} \sum_{k=1}^{p^{\prime} - 1} \sum_{k^{\prime} = k+1}^{p^{\prime}} \frac{1 + \sgn(i_k - i_{k^{\prime}})}{2}(\sigma_{i_k} - \sigma_{i_{k^{\prime}}})(n_{i_k} - n_{i_k - 1})(n_{i_{k^{\prime}}} - n_{i_{k^{\prime}} - 1}) \\
& + \sum_{j=1}^{p^{\prime}} \sigma_j \Bigl( \sum_{k=1}^{r^{\prime}(j)} (n_{i_k} - n_{i_k - 1}) \Bigr) n_{j-1}, \\
b_2(\underline{n}) & = \left\{
\begin{array}{ll}
\displaystyle \sum_{k=\frac{l-t}{2} + 1}^{\frac{t-1}{2}} \frac{1+ \sigma_{i_k}}{2} n_{i_k - 1} & \text{if $l < 2t$} \\
\displaystyle - \sum_{k=\frac{t+1}{2} + 1}^{\frac{l-t}{2}} \frac{1+ \sigma_{i_k}}{2} n_{i_k - 1} & \text{if $l > 2t,$} \\
\end{array} \right. \\
X(\underline{n}) & = (-1)^{n_{p^{\prime}}} q^{\kappa(p^{\prime})} \frac{(q)_{N-1} (q)_{n_{p^{\prime}}}}{(q)_{N-n_{p^{\prime}} - 1}} \prod_{j=1}^{p^{\prime}} \frac{\tau(j)}{(q)_{n_j - n_{j-1}}}.
\end{aligned}
\end{equation*}

\end{theorem}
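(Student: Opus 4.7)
The plan is to prove Theorem \ref{takata} by a state-sum computation of the colored Jones polynomial of $\mathfrak{b}(l,t)^{*}$ from a standard planar presentation. First I would fix a plat-closure presentation of $\mathfrak{b}(l,t)$ as a 4-strand tangle whose crossing data is encoded by the Schubert normal form of $(l,t)$. The arithmetic data $r(j) \equiv (2j-1)t \pmod{2l}$ with $-l < r(j) < l$ and $\sigma_j = (-1)^{\lfloor (2j-1)t/l \rfloor}$ parametrize a Farey-type walk from $0/1$ to $t/l$: the index $j$ runs through the $p^{\prime} = (l-1)/2$ generations of the walk, the sign $\sigma_j$ records the direction of each step, and $r^{\prime}(j) = (|r(j)|+1)/2$ records the position of the new crossing relative to the earlier ones. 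Each $j$ should contribute one summation variable $n_j$ in the state sum, while the permutation $j \mapsto i_k$ records the geometric order of the crossings along the braid.

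Next, I would compute $J_N(\mathfrak{b}(l,t)^{*};q)$ by placing the $N$-dimensional highest-weight representation of $U_q(\mathfrak{sl}_2)$ on every strand and summing the local R-matrix contributions crossing by crossing. Each crossing at generation $j$ yields a factor $1/(q)_{n_j - n_{j-1}}$ times a local phase: a factor $(-1)^{n_j - n_{j-1}}$ when $\sigma_j = -1$ and $q^{\binom{n_j - n_{j-1} + 1}{2}}$ when $\sigma_j = 1$, so the product over $j$ assembles exactly into $\prod \tau(j)$. Closing by Jones--Wenzl idempotents and evaluating the trace then produces the prefactor $(q)_{N-1}(q)_{n_{p^{\prime}}}/(q)_{N-n_{p^{\prime}}-1}$, together with $(-1)^{n_{p^{\prime}}}$ and the contribution $q^{\kappa(p^{\prime})}$, the latter coming from whether the topmost crossing contributes a term linear in $N$.

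The third step is to collect the global exponent of $q$. Local contributions linear in $N$ aggregate into $a(\underline{n}) N$; bilinear cross-terms $n_{i_k} n_{i_{k^{\prime}}}$ between distant crossings assemble into the double sum appearing in $b_1(\underline{n})$, with the $\sgn(i_k - i_{k^{\prime}})$ factor distinguishing nested versus unnested pairs of crossings and $\sigma_{i_k} - \sigma_{i_{k^{\prime}}}$ recording their relative orientations. Boundary effects at the top and bottom of the braid give the extra linear terms in $b_1(\underline{n})$, and the dichotomy $l < 2t$ versus $l > 2t$ in $b_2(\underline{n})$ reflects which half of the Farey walk contains the relevant boundary crossings. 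Finally, passing from $\mathfrak{b}(l,t)$ to its mirror is handled either by substituting $q \mapsto q^{-1}$ at the end or, equivalently, by using the opposite R-matrix throughout; a telescoping reindexing in the exponents absorbs the resulting overall sign conventions.

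The main obstacle is the combinatorial bookkeeping: deriving the precise coefficients in $b_1(\underline{n})$ from the nesting pattern of the crossings, and disentangling the two cases of $b_2(\underline{n})$. To make the bookkeeping manageable I would first verify the formula in the degenerate cases $t = 1$ (the torus knots $T_{(2,l)}$) and $l = 2t \pm 1$ (the twist knots), where the Farey walk is essentially linear and Takata's expression should collapse to already-known $q$-hypergeometric formulas, such as \eqref{kppos} and \eqref{kpneg}. Once these base cases are checked, an induction on the length of the continued fraction expansion of $l/t$, combined with a Habiro--Masbaum skein-theoretic move for inserting one additional twist region, should deliver the inductive step and complete the proof.
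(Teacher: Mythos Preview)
The paper does not prove Theorem~\ref{takata}. It is quoted verbatim from Takata's paper~\cite{takata} as a preliminary tool (the sentence introducing it reads ``The main result in \cite{takata} is an explicit formula for the colored Jones polynomial of $\mathfrak{b}(l,t)^{*}$''), and the paper then \emph{applies} it, together with Lemmas~\ref{l8}--\ref{l7}, to derive Theorems~\ref{main1} and~\ref{main2}. So there is no proof in the present paper for your proposal to be compared against.

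Your sketch is a reasonable outline of how one might reconstruct Takata's original argument: Takata does indeed work from a Conway-form presentation of the $2$-bridge knot, assigns local $R$-matrix data to the crossings, and then collects exponents. But as written your proposal is only a high-level plan, not a proof: the content of Takata's theorem lies precisely in the ``combinatorial bookkeeping'' you flag as the main obstacle, namely the exact derivation of the coefficients in $a(\underline{n})$, $b_1(\underline{n})$, and $b_2(\underline{n})$ from the nesting/orientation data of the crossings. The proposed induction on the continued-fraction length via a Habiro--Masbaum skein move is also speculative; Takata's computation is a direct state-sum evaluation rather than an induction of that kind, and it is not clear your inductive step would produce the specific $\sgn(i_k - i_{k'})$ and $\sigma_{i_k} \pm \sigma_{i_{k'}}$ structure without essentially redoing the full calculation. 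If the goal is to supply a self-contained proof, you would need to carry out the exponent bookkeeping explicitly; if the goal is simply to use the result, citing \cite{takata} (as the paper does) is the correct move.
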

 
As the mirror image of the torus knots $T_{(2, 2t+1)}$ is $\frak{b}(2t+1, 1)$ (cf. \cite{bz, petersen, Tran}), one can check that Theorem \ref{takata} recovers the $q$-hypergeometric expression for $J_N(T_{(2, 2t+1)}; q)$ given in \cite{hikami1, hikami2}. Our interest will be to apply Theorem \ref{takata} to the case of the double twist knots $K_{(m, p)} = \frak{b}(4mp-1, 4mp-2p-1)$ and $K_{(m,-p)}=\frak{b}(4mp+1, 4mp-2p+1)$ (cf. \cite{Tran}). In order to facilitate these computations, we need the following results concerning $\sigma_j$, $i_k$ and $\sigma_{i_k}$. We omit the proofs as they are straightforward generalizations of Lemmas 6--9 in \cite{takata}.
 
 \begin{lemma} \label{l8} For $l=4mp-1$ and $t=4mp-2p-1$, we have \\

\begin{enumerate}
\item[(i)] $\sigma_{j} = \left\{
\begin{array}{ll}
1 & \text{if $j \equiv 1, 2, \dotsc, m \pmod{2m}$} \\
-1 & \text{if $j \equiv 0, m+1, \dotsc, 2m-1 \pmod{2m}.$} \\
\end{array} \right.$ \\

\item[(ii)] To compute $i_k$, apply the following algorithm. Divide the integers from $1$ to $p'$ into $2m-1$ intervals, each of length $p$, and a final interval of length $p -1$. The value of $i_k$ is $2m(p-k) + m$ in the first interval and $1- (2m(p-k) + m)$ in the second.  If $j>1$ is odd, then to obtain the value of $i_k$ in the $j$th interval, add $4mp- 1$ to the formula for $i_k$ in the $(j-2)$th interval. If $j > 2$ is even, then to obtain the value of $i_k$ in the $j$th interval, subtract $4mp - 1$ from the formula for $i_k$ in the $(j-2)$th interval. \\

\item[(iii)] To compute $\sigma_{i_k}$, apply the following algorithm.  Divide the integers from $1$ to $p'$ into $2m-1$ intervals, each of length $p$, and a final interval of length $p - 1$.  The value  of $\sigma_{i_k}$ alternates between $1$ and $-1$ starting with $1$ in the first interval. 
\end{enumerate}

\end{lemma}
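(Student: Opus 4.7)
The plan is to follow the template of Lemmas 6--9 in \cite{takata}, adapting the arguments to the specific pair $(l, t) = (4mp-1,\, 4mp-2p-1)$. The key identity driving everything is $t = l - 2p$, which gives
\[
(2j-1)t \;=\; (2j-1)l \;-\; (4j-2)p,
\]
so that the modular and floor computations needed for $\sigma_j$, $r(j)$, $r'(j)$, and $i_k$ all reduce to an analysis of $(4j-2)p$ modulo $2l$. This is exactly the simplification Takata exploits in the case $m=1$, and the $m \geq 2$ generalization amounts to running the same analysis for each of $2m$ residue classes of $j$ modulo $2m$.

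For part (i), I would parametrize $j = 2mk + s$ with $k \geq 0$ and $1 \leq s \leq 2m$, and use
\[
\left\lfloor \frac{(2j-1)t}{l} \right\rfloor \;=\; (2j-1) - \left\lceil \frac{(4j-2)p}{l} \right\rceil,
\]
valid because $(4j-2)p/l$ is never an integer in the range of interest. Hence $\sigma_j = -(-1)^{\lceil (4j-2)p/l \rceil}$. In the base block $k = 0$, one checks that $(4s-2)p < l$ for $1 \leq s \leq m$ (ceiling equals $1$, so $\sigma_j = 1$) and $l \leq (4s-2)p < 2l$ for $m+1 \leq s \leq 2m$ (ceiling equals $2$, so $\sigma_j = -1$). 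For the inductive step, $(4(j+2m)-2)p/l = (4j-2)p/l + 2 + 2/l$, so shifting $j$ by $2m$ increases the ceiling by exactly $2$ and preserves its parity.

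For part (ii), once $\sigma_j$ is known, $r(j) \in (-l,l)$ is pinned down by the congruence $r(j) \equiv (2j-1)t \pmod{2l}$ together with $\sgn(r(j)) = \sigma_j$ (up to the standard convention). In each residue class of $j$ modulo $2m$, $r(j)$ is an explicit linear function of $j$, so $r'(j) = (|r(j)|+1)/2$ can be written in closed form. Inverting the relation $i_{r'(j)} = j$ and sorting the outputs by which ``interval of length $p$'' they occupy produces the algorithm in (ii); the ``add or subtract $4mp-1$'' rule between consecutive intervals of the same parity is exactly the effect of $j \mapsto j \pm 2m$ on the linear formula for $r(j)$. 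Part (iii) is then immediate: by (i), $\sigma_{i_k}$ depends only on $i_k \bmod 2m$, and by (ii) this residue alternates between the class giving $\sigma = +1$ and the class giving $\sigma = -1$ as $k$ moves from one interval to the next.

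The main obstacle is the bookkeeping, not any single conceptual step. One must verify that the piecewise formulas for $r'$ on the $2m$ residue classes of $j$ together define a bijection of $\{1, \ldots, p'\}$ with itself, and one must confirm that the final interval (of length $p-1$ rather than $p$) and the small-parameter edge cases (for instance $m = p = 1$, where $(4j-2)p$ can be an exact multiple of $l$) fit the same pattern. Provided these boundary situations are treated as uniformly as in the proofs of Lemmas 6--9 of \cite{takata}, the remainder of the argument is a direct case-by-case computation.
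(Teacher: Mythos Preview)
Your proposal is correct and follows exactly the route the paper indicates: the paper does not give a proof at all but simply states that ``We omit the proofs as they are straightforward generalizations of Lemmas 6--9 in \cite{takata},'' and your sketch is precisely such a generalization, exploiting $t=l-2p$ to reduce the computation of $\sigma_j$, $r(j)$, and $i_k$ to a case analysis over residues of $j$ modulo $2m$. One small point worth tightening: your inductive step for (i) asserts that shifting $j\mapsto j+2m$ changes $\lceil (4j-2)p/l\rceil$ by exactly $2$, but since the shift adds $2+2/l$ rather than an integer, this requires checking that the fractional part of $(4j-2)p/l$ is never within $2/l$ of $1$ for the relevant $j$; this is true in the needed range (the exceptional residues fall outside $1\le j\le p'-2m$) and is exactly the sort of boundary verification you already flag.
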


\begin{lemma} \label{l9} Let $l=4mp-1$ and $t=4mp-2p-1$.   Then for $1\leq k \leq p'$ and $1\leq j \leq p'-1$ we have \\

\begin{enumerate}

\item[(i)] $\sigma_{i_k} + \sigma_{i_{p^{\prime} + 1 - k}} = \left\{
\begin{array}{ll}
2 & \text{if $k = p, 3p, \dotsc, (2m-1)p$} \\
-2 & \text{if $k = 2p, \dotsc, (2m-2)p$} \\
0 & \text{otherwise.} \\
\end{array} \right.$ \\

\item[(ii)] $\sigma_{j+1} + \sigma_{p^{\prime} + 1 - j} = 0$.
\end{enumerate}
\end{lemma}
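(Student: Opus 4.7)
The plan is to deduce both parts directly from Lemma \ref{l8}, exploiting the fact that $p' = 2mp - 1$ is chosen so that the reflection $k \mapsto p' + 1 - k = 2mp - k$ interacts cleanly with the period-$2m$ structure that governs the signs.

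For part (ii), I would invoke Lemma \ref{l8}(i), which determines $\sigma_j$ purely by $j \bmod 2m$. Since $2mp \equiv 0 \pmod{2m}$, one has $p' + 1 - j \equiv -j \pmod{2m}$, so the identity reduces to checking that for every residue class, the residues $j+1$ and $-j$ lie on opposite sides of the dichotomy $\{1,\ldots,m\}$ versus $\{0,m+1,\ldots,2m-1\}$. Writing $a \equiv j+1$ and $b \equiv -j \pmod{2m}$ with $0 \leq a,b \leq 2m-1$, the relation $a+b \equiv 1 \pmod{2m}$ forces either $(a,b) \in \{(0,1),(1,0)\}$ or $a + b = 2m+1$, and a short case check then yields $\sigma_{j+1} = -\sigma_{p'+1-j}$ in every case.

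For part (i), I would use the interval description of Lemma \ref{l8}(iii): $\sigma_{i_k}$ is constant on each of the $2m-1$ intervals of length $p$ partitioning $\{1,\ldots,(2m-1)p\}$, alternating between $+1$ and $-1$ starting with $+1$, and then is $-1$ on the final length-$(p-1)$ interval. Writing $k = (I-1)p + r$ with $1 \leq r \leq p$ and $1 \leq I \leq 2m-1$ gives $p' + 1 - k = (2m-I)p + (p-r)$, and the key observation is that for $r < p$ this lands in the interval indexed by $2m - I + 1$, while for $r = p$ it lands at the right endpoint of the interval indexed by $2m - I$. Since $I$ and $2m-I+1$ have opposite parities, the first case yields cancellation; since $I$ and $2m-I$ have the same parity, the second case yields $\pm 2$ according as $I$ is odd or even, matching the claimed dichotomy $k = p, 3p, \ldots, (2m-1)p$ versus $k = 2p, 4p, \ldots, (2m-2)p$. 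The remaining case, $k$ in the short final interval, has $p'+1-k$ lying in the first interval, whose parity index $1$ is opposite to that of the final interval, so cancellation again holds.

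The main obstacle is bookkeeping rather than mathematics, namely correctly handling the asymmetry caused by the last interval having length $p-1$, and correctly treating the special behaviour at the right endpoint $r = p$ of each full-length interval, where the image of the reflection jumps from interval $2m - I + 1$ into interval $2m - I$. No ingredient beyond Lemma \ref{l8} and careful parity counting modulo $2m$ is needed, which is why the authors felt justified in omitting the argument.
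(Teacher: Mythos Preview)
Your proposal is correct and matches precisely what the paper intends: the authors explicitly omit the proof as a ``straightforward generalization'' of Takata's lemmas, and your argument---reading off the sign patterns from Lemma~\ref{l8}(i) and (iii) and then tracking the reflection $k \mapsto p'+1-k = 2mp-k$ through the length-$p$ intervals, with special care at the right endpoints $k=Ip$ and on the short final interval---is exactly that straightforward case analysis. There is nothing to add.
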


\begin{lemma} \label{l6} For $l=4mp + 1$ and $t=4mp - 2p + 1$, we have \\

\begin{enumerate}
\item[(i)] $\sigma_{j} = \left\{
\begin{array}{ll}
1 & \text{if $j \equiv 1, 2, \dotsc, m \pmod{2m}$} \\
-1 & \text{if $j \equiv 0, m+1, \dotsc, 2m-1 \pmod{2m}.$} \\
\end{array} \right.$ \\

\item[(ii)] To compute $i_k$, apply the following algorithm.  Divide the integers from $1$ to $p'$ into $2m$ intervals of length $p$. The value of $i_k$ is $2m(p-k) + m + 1$ in the first interval and $1- (2m(p-k) + m + 1)$ in the second. If $j>1$ is odd, then to obtain the value of $i_k$ in the $j$th interval, add $4mp+1$ to the formula for $i_k$ in the $(j-2)$th interval. If $j > 2$ is even, then to obtain the value of $i_k$ in the $j$th interval, subtract $4mp+1$ from the formula for $i_k$ in the $(j-2)$th interval. \\

\item[(iii)] To compute $\sigma_{i_k}$, apply the following algorithm.  Divide the integers from 1 to $p'$ into $2m$ intervals  of length $p$. The value of $\sigma_{i_k}$ alternates between $-1$ and $1$ starting with $-1$ in the first interval. 
\end{enumerate}

\end{lemma}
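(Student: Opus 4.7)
\emph{Plan of proof.} The proof follows the template of Takata's Lemmas 6--9 in \cite{takata} and of Lemma \ref{l8} above, with the substitution $(l,t) = (4mp-1, 4mp-2p-1) \to (l,t) = (4mp+1, 4mp-2p+1)$. As before, the single algebraic identity driving everything is $l - t = 2p$, which together with the fact that $(2j-1)$ is odd gives
\begin{equation*}
(2j-1)t \;\equiv\; l - 2p(2j-1) \pmod{2l}.
\end{equation*}
Both $\sigma_j$ (via the floor of $(2j-1)t/l$) and $r(j)$ (via reduction modulo $2l$ into $(-l,l)$) are read off from this single congruence, so the entire lemma reduces to careful bookkeeping on the affine function $R(j) := l - 2p(2j-1) = 4mp+2p+1-4pj$.

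For part (i), the displayed congruence gives $\sigma_j = -(-1)^{\lceil 2p(2j-1)/l\rceil}$, the quantity $2p(2j-1)/l$ being never an integer for $1 \leq j \leq 2mp$ (since $\gcd(l,2p) = \gcd(l,t) = 1$ and $2j-1 < l$). A short argument with odd integers then shows that $\lceil 2p(2j-1)/l\rceil = k$ exactly when $j \in \{m(k-1)+1,\dots,mk\}$, for $k = 1,2,\dots,2p$. This partitions $\{1,\dots,2mp\}$ into $2m$ blocks of length $p$ on which $\sigma_j = (-1)^{k+1}$ is constant, and the parity of $k$ yields the claimed $\pmod{2m}$ description.

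For part (ii), one reduces $R(j)$ into $(-l,l)$ by adding a suitable multiple of $2l$; since $R(j)$ decreases by $4p$ per unit increase in $j$ and $l = 4mp+1$, this again partitions the index set into exactly $2m$ blocks of length $p$. (This is the single structural difference from Lemma \ref{l8}, where $l = 4mp-1$ produces $2m-1$ blocks of length $p$ followed by a final block of length $p-1$.) Computing $|r(j)|$, and hence $r'(j) = (|r(j)|+1)/2$, block by block and then inverting $i_{r'(j)} = j$ yields the formulas for $i_k$ in the first two blocks; the $\pm l$ shift between the $j$th and $(j-2)$th block formulas records the extra $\mp 2l$ adjustment required to bring $R(j)$ back into the window. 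Part (iii) is then a direct corollary: within each block, $i_k$ is an affine function of $k$ with slope divisible by $2m$, so $i_k \pmod{2m}$ is constant; direct checks on the first two blocks (using (i)) give $\sigma_{i_k} = -1$ and $+1$ respectively, and the shift $l \equiv 1 \pmod{2m}$ between blocks differing by two propagates the $\pm 1$ alternation through all $2m$ blocks. The only delicate step is the boundary analysis in (ii) that confirms the clean division into $2m$ blocks of length $p$; once that is in place, the remaining identities are mechanical.
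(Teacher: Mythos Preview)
Your approach is exactly what the paper intends: it omits the proof entirely, stating only that it is a ``straightforward generalization of Lemmas 6--9 in \cite{takata},'' and your sketch supplies precisely that generalization via the key congruence $(2j-1)t \equiv l - 2p(2j-1) \pmod{2l}$ coming from $l-t=2p$. One small slip: in part~(i), after correctly identifying the level sets $\{m(k-1)+1,\dots,mk\}$ for $k=1,\dots,2p$, you describe this as ``$2m$ blocks of length $p$'' when it is $2p$ blocks of length $m$; the subsequent parity argument and $\pmod{2m}$ conclusion are unaffected.
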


\begin{lemma} \label{l7} Let $l=4mp + 1$ and $t=4mp - 2p + 1$.   Then for $1\leq k \leq p'$ and $1\leq j \leq p'-1$ we have \\

\begin{enumerate}

\item[(i)] $\sigma_{i_k} + \sigma_{i_{p^{\prime} + 1 - k}} = 0$ \\

\item[(ii)] $\sigma_{j+1} + \sigma_{p^{\prime} + 1 - j} =  \left\{
\begin{array}{ll}
2 & \text{if $j \equiv 0 \pmod{2m}$} \\
-2 & \text{if $j \equiv m \pmod{2m}$} \\
0 & \text{otherwise.} \\
\end{array} \right.$ \\

\end{enumerate}
\end{lemma}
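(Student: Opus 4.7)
The plan is to prove both parts of Lemma \ref{l7} by direct computation, leveraging Lemma \ref{l6}. Since $l = 4mp+1$ forces $p' = (l-1)/2 = 2mp$, which is a multiple of both $2m$ and $p$, the involution $k \mapsto p'+1-k$ interacts cleanly with the period-$2m$ structure of $\sigma_j$ and the length-$p$ interval structure of $\sigma_{i_k}$ described in Lemma \ref{l6}. Both parts will reduce to bookkeeping modulo $2m$.

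For part (i), Lemma \ref{l6}(iii) partitions $\{1,\dots,p'\}$ into $2m$ intervals of length $p$, and $\sigma_{i_k}$ takes the constant value $(-1)^{s}$ on the $s$-th interval (so $-1$ on the first, $+1$ on the second, and so on). A short arithmetic check shows that if $k$ lies in the $s$-th interval, meaning $(s-1)p+1 \leq k \leq sp$, then $p'+1-k = 2mp+1-k$ lies in the $(2m-s+1)$-st interval. Consequently
\[
\sigma_{i_k} + \sigma_{i_{p'+1-k}} \;=\; (-1)^s + (-1)^{2m-s+1} \;=\; (-1)^s + (-1)^{s+1} \;=\; 0,
\]
which is precisely part (i).

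For part (ii), I would reduce both $j+1$ and $p'+1-j \equiv 1-j \pmod{2m}$ and apply Lemma \ref{l6}(i), splitting the residue of $j \pmod{2m}$ into four cases. When $j \equiv 0$, both $j+1$ and $1-j$ reduce to $1$, so $\sigma_{j+1}=\sigma_{p'+1-j}=1$ and the sum is $2$. When $j \equiv m$, both reduce to $m+1$, so both $\sigma$-values are $-1$ and the sum is $-2$. For $j \equiv k \pmod{2m}$ with $1 \leq k \leq m-1$, the residue $k+1$ lies in $\{2,\dots,m\}$ while $1-k$ reduces to a residue in $\{0,m+1,\dots,2m-1\}$, so the two $\sigma$-values have opposite signs and cancel; the remaining range $m+1 \leq k \leq 2m-1$ is symmetric. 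This exhausts all residues and yields the stated formula.

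The main obstacle is simply the case bookkeeping; no subtle argument is needed, because the divisibility $2m \mid p'$ means the period and interval structures line up exactly. This is precisely why the statement of Lemma \ref{l7} is simpler than its $l = 4mp-1$ counterpart in Lemma \ref{l9}, where the final interval has length $p-1$ and one must track a boundary offset. The computation throughout is a verbatim adaptation of the argument sketched by Takata in the $m=1$ setting of \cite{takata}.
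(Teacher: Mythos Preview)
Your proof is correct and is precisely the kind of direct computation the paper has in mind; the paper omits the proof entirely, noting only that Lemmas \ref{l8}--\ref{l7} are straightforward generalizations of Lemmas 6--9 in \cite{takata}, and your argument---using Lemma \ref{l6}(iii) to read off $\sigma_{i_k}$ on length-$p$ intervals and Lemma \ref{l6}(i) to reduce modulo $2m$---is exactly this straightforward verification.
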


We now illustrate the computation of $a({\underline{n}})$ and $b_1({\underline{n}}) + b_2({\underline{n}})$ for $l=8p-1$ and $t=6p-1$. The routine evaluation of $X(\underline{n})$
is left to the reader. First, we take $m=2$ in Lemmas \ref{l8} and \ref{l9} to obtain

\begin{equation} \label{step0}
\sigma_{j} = \left\{
\begin{array}{ll}
1 & \text{if $j \equiv 1, 2 \pmod{4}$} \\
-1 & \text{if $j \equiv 0, 3 \pmod{4},$} \\
\end{array} \right. \\
\end{equation}

\begin{equation} \label{step1}
i_k = \left\{
\begin{array}{ll}
4(p-k) + 2 & \text{if $1 \leq k \leq p$,} \\
4(k-p) - 1 & \text{if $p+1 \leq k \leq 2p$,} \\
12p - 4k + 1 & \text{if $2p+1 \leq k \leq 3p$,} \\
 4k - 12p & \text{if $3p+1 \leq k \leq 4p-1$,} \\
\end{array} \right. \\
\end{equation}

\begin{equation} \label{step2}
\sigma_{i_k} = \left\{
\begin{array}{ll}
1 & \text{if $1 \leq k \leq p$,} \\
-1 & \text{if $p+1 \leq k \leq 2p$,} \\
1 & \text{if $2p+1 \leq k \leq 3p$,} \\
 -1 & \text{if $3p+1 \leq k \leq 4p-1,$} \\
\end{array} \right. \\
\end{equation}

\begin{equation} \label{step3}
\sigma_{i_k} + \sigma_{i_{4p-k}} = \left\{
\begin{array}{ll}
2 & \text{if $k = p, 3p$} \\
-2 & \text{if $k = 2p$} \\
0 & \text{otherwise} \\
\end{array} \right. \\
\end{equation}
and
\begin{equation} \label{step4}
\sigma_{j+1} + \sigma_{4p - j} = 0.
\end{equation}

\noindent Applying (\ref{step0}), (\ref{step1}), (\ref{step3}), (\ref{step4}) and reindexing yields

\begin{equation} \label{a}
\begin{aligned}
a({\underline{n}}) & = -\frac{1}{2} \sum_{j=1}^{4p-1} \Biggl( \sum_{k=r^{\prime}(j)}^{4p-1} (\sigma_{i_k} + \sigma_{i_{4p - k}})\Biggr) (n_j - n_{j-1}) - 1 \\
& = -\frac{1}{2} \Biggl[ \sum_{j=1}^{p}  \Biggl( \sum_{k=3p-j+1}^{4p-1} (\sigma_{i_k} + \sigma_{i_{4p -k}})\Biggr) (n_{4j-3} - n_{4j-4}) \\
&  + \sum_{j=1}^{p} \Biggl( \sum_{k=p-j+1}^{4p-1} (\sigma_{i_k} + \sigma_{i_{4p -k}})\Biggr) (n_{4j-2} - n_{4j-3}) + \sum_{j=1}^{p-1} \Biggl( \sum_{k=3p+j}^{4p-1} (\sigma_{i_k} + \sigma_{i_{4p -k}})\Biggr) (n_{4j} - n_{4j-1}) \\
& + \sum_{j=1}^{p} \Biggl( \sum_{k=p+j}^{4p-1} (\sigma_{i_k} + \sigma_{i_{4p -k}})\Biggr) (n_{4j-1} - n_{4j-2}) \Biggr] - 1 \\
& = \sum_{j=1}^{2p-1} (-1)^j n_{2j} - 1.
\end{aligned}
\end{equation}

\noindent By (\ref{step0}) and (\ref{step2}), the second and fifth sums in $b_1({\underline{n}})$ are zero. We then use (\ref{step0})--(\ref{step2}) and reindex to obtain

\begin{equation} \label{thirdpart}
\begin{aligned}
-\sum_{k=p+1}^{4p-1} n_{i_{k} - 1} & = - \Biggl( \sum_{k=p+1}^{2p} n_{i_{k} - 1} + \sum_{k=2p+1}^{3p} n_{i_{k} - 1} + \sum_{k=3p+1}^{4p-1} n_{i_{k} - 1} \Biggr) \\
& = - \Biggl( \sum_{j=1}^{p} (n_{4j-2} + n_{4j-4}) + \sum_{j-1}^{p-1} n_{4j-1} \Biggr),
\end{aligned}
\end{equation}

\begin{equation} \label{fourthpart}
\sum_{k=p+1}^{4p-1} \frac{1+\sigma_{i_k}}{2} n_{i_k} = \sum_{k=2p+1}^{3p} n_{i_k} = \sum_{j=1}^{p} {n_{4j-3}},
\end{equation}

\begin{equation} \label{sixthpart}
\frac{1}{2} \sum_{j=1}^{4p-2} (\sigma_{j+1} - \sigma_j) n_j = \sum_{j=1}^{2p-1} (-1)^j n_{2j}
\end{equation}
and
\begin{equation} \label{b2}
b_2({\underline{n}}) = \sum_{k=p+1}^{3p-1} \frac{1+\sigma_{i_k}}{2} n_{{i_k} - 1} = \sum_{k=2p+1}^{3p-1} n_{{i_k} - 1} = \sum_{j=2}^{p} n_{4j-4}. 
\end{equation}
By (\ref{a})--(\ref{b2}), the sum of $b_2({\underline{n}})$ and the first six terms in $b_1({\underline{n}})$ equals

\begin{equation} \label{sixb1b2}
1 + n_{4p-1}  + \sum_{j=1}^{p} (n_{4j-3} - n_{4j-2} - n_{4j-1}).
\end{equation}

\noindent To compute the seventh term in $b_1({\underline{n}})$, we use (\ref{step1}) and (\ref{step2}) to observe that $k < k^{\prime}$ and $\sigma_{i_k} \neq \sigma_{i_{k^{\prime}}}$ if and only if either $1 \leq k \leq p$ and $p+1 \leq k^{\prime} \leq 2p$ or $1 \leq k \leq p$ and $3p+1 \leq k^{\prime} \leq 4p-1$ or 
$p+1 \leq k \leq 2p$ and $2p+1 \leq k^{\prime} \leq 3p$ or $2p+1 \leq k \leq 3p$ and $3p+1 \leq k^{\prime} \leq 4p-1$.  Also, $\sgn(i_k - i_{k^{\prime}}) = 1$ if and only if $i_k > i_{k^{\prime}}$ and either $i_k = 4p-4k+2$ for $1 \leq k \leq p$ and $i_{k^{\prime}} = 4k^{\prime} - 4p - 1$ for $p+1 \leq k^{\prime} \leq 2p$ or $i_k = 4p-4k+2$ for $1 \leq k \leq p$ and $i_{k^{\prime}} = 4k^{\prime} -12p$ for $3p+1 \leq k^{\prime} \leq 4p-1$ or $i_k = 4k-4p-1$ for $p+1 \leq k \leq 2p$ and $i_{k^{\prime}} = 12p - 4k^{\prime} + 1$ for $2p+1 \leq k^{\prime} \leq 3p$ or $i_k = 12p - 4k + 1$ for $2p+1 \leq k \leq 3p$ and $i_{k^{\prime}} = 4k^{\prime} - 12p$ for $3p+1 \leq k^{\prime} \leq 4p-1$. Taking these cases into account and reindexing, we have

\begin{equation*}
\begin{aligned}
& - \frac{1}{2} \sum_{k=1}^{4p-2} \sum_{k^{\prime} = k+1}^{4p-1} \frac{1 + \sgn(i_k - i_{k^{\prime}})}{2}(\sigma_{i_k} - \sigma_{i_{k^{\prime}}})(n_{i_k} - n_{i_k - 1})(n_{i_{k^{\prime}}} - n_{i_{k^{\prime}} - 1}) \\
& = - \sum_{k=1}^{p} \sum_{k^{\prime} = p+1}^{2p-k} (n_{i_k} - n_{i_k - 1})(n_{i_{k^{\prime}}} - n_{i_{k^{\prime}} - 1}) - \sum_{k=1}^{p} \sum_{k^{\prime}=3p+1}^{4p-k} (n_{i_k} - n_{i_k - 1})(n_{i_{k^{\prime}}} - n_{i_{k^{\prime}} - 1}) \\
& + \sum_{k=p+1}^{2p} \sum_{k^{\prime} = 4p-k+1}^{3p} (n_{i_k} - n_{i_k - 1})(n_{i_{k^{\prime}}} - n_{i_{k^{\prime}} - 1}) - \sum_{k=2p+1}^{3p} \sum_{k^{\prime} = 3p+1}^{6p-k} (n_{i_k} - n_{i_k - 1})(n_{i_{k^{\prime}}} - n_{i_{k^{\prime}} - 1}) \\
& = - \sum_{j=1}^{p} \sum_{j^{\prime}=2}^{j} (n_{4j-2} - n_{4j-3})(n_{4j^{\prime} - 5} - n_{4j^{\prime} - 6}) - \sum_{j=1}^{p} \sum_{j^{\prime}=2}^{j} (n_{4j-2} - n_{4j-3})(n_{4j^{\prime} -4} - n_{4j^{\prime} - 5}) \\
\end{aligned}
\end{equation*}

\begin{equation}  \label{sevenb1}
\begin{aligned}
& + \sum_{j=1}^{p} \sum_{j^{\prime} = 1}^{j} (n_{4j-1} - n_{4j-2})(n_{4j^{\prime} - 3} - n_{4j^{\prime} - 4}) - \sum_{j=1}^{p} \sum_{j^{\prime}=2}^{j} (n_{4j-3} - n_{4j-4})(n_{4j^{\prime}-4} - n_{4j^{\prime} - 5}).
\end{aligned}
\end{equation}

\noindent Finally, using (\ref{step0}) and (\ref{step1}), then reindexing and simplifying gives the eighth term in $b_1(\underline{n})$, 

\begin{equation} \label{lastb1}
\begin{aligned}
& \sum_{j=1}^{4p-1} \sigma_j \Bigl( \sum_{k=1}^{r^{\prime}(j)} (n_{i_k} - n_{i_k - 1}) \Bigr) n_{j-1} \\
& = \sum_{j=1}^{p} \sigma_{4j-3} \Bigl( \sum_{k=1}^{3p-j+1} (n_{i_k} - n_{i_k - 1}) \Bigr) n_{4j-4} +  \sum_{j=1}^{p} \sigma_{4j-2} \Bigl( \sum_{k=1}^{p-j+1} (n_{i_k} - n_{i_k - 1}) \Bigr) n_{4j-3} \\
& +  \sum_{j=1}^{p-1} \sigma_{4j} \Bigl( \sum_{k=1}^{3p+j} (n_{i_k} - n_{i_k - 1}) \Bigr) n_{4j-1} +  \sum_{j=1}^{p} \sigma_{4j-1} \Bigl( \sum_{k=1}^{p+j} (n_{i_k} - n_{i_k - 1}) \Bigr) n_{4j-2} \\
& = \sum_{j=1}^{p} \Biggl( \sum_{k=1}^{p} (n_{4p-4k+2} - n_{4p-4k+1}) + \sum_{k=p+1}^{2p} (n_{4k-4p-1} - n_{4k-4p-2}) \\
& + \sum_{k=2p+1}^{3p-j+1} (n_{12p - 4k + 1} - n_{12p - 4k}) \Biggr) n_{4j-4} + \sum_{j=1}^{p} \Biggl( \sum_{k=1}^{p-j+1} (n_{4p-4k+2} - n_{4p-4k+1}) \Biggr) n_{4j-3} \\
& - \sum_{j=1}^{p-1} \Biggl( \sum_{k=1}^{p} (n_{4p-4k+2} - n_{4p-4k+1}) + \sum_{k=p+1}^{2p} (n_{4k-4p-1} - n_{4k-4p-2}) \\
& + \sum_{k=2p+1}^{3p} (n_{12p - 4k + 1} - n_{12p - 4k}) + \sum_{k=3p+1}^{3p+j} (n_{4k-12p} - n_{4k-12p-1}) \Biggr) n_{4j-1} \\
& - \sum_{j=1}^{p} \Biggl( \sum_{k=1}^{p} (n_{4p-4k+2} - n_{4p-4k+1}) + \sum_{k=p+1}^{p+j} (n_{4k-4p-1} - n_{4k-4p-2}) \Biggr) n_{4j-2} \\
& = \sum_{j=1}^{p} \Biggl( \sum_{j^{\prime}=1}^{j-1} (n_{4j^{\prime} - 1} - n_{4j^{\prime} - 3}) + \sum_{j=j^{\prime}}^{p} (n_{4j^{\prime} - 1} - n_{4j^{\prime} - 4}) \Biggr) n_{4j-4} + \sum_{j=1}^{p} \Biggl( \sum_{j^{\prime} = j}^{p} (n_{4j^{\prime} - 2} - n_{4j^{\prime} - 3} ) \Biggr) n_{4j-3} \\
& - \sum_{j=1}^{p-1} \Biggl( \sum_{j^{\prime} = j+1}^{p} (n_{4j^{\prime} - 1} - n_{4j^{\prime} - 4}) + n_{4j}\Biggr) n_{4j-1} \\
& - \sum_{j=1}^{p} \Biggl( \sum_{j^{\prime}=1}^{j} (n_{4j^{\prime} - 1} - n_{4j^{\prime} - 3}) + \sum_{j^{\prime}=j+1}^{p} (n_{4j^{\prime} - 2} - n_{4j^{\prime} - 3}) \Biggr) n_{4j-2}.
\end{aligned}
\end{equation}

\noindent Thus, combining (\ref{a})--(\ref{lastb1}) implies that $b_1({\underline{n}}) + b_2({\underline{n}})$ equals

\begin{equation*}
\begin{aligned}
& 1 + n_{4p-1}  + \sum_{j=1}^{p} (n_{4j-3} - n_{4j-2} - n_{4j-1}) \\
& - \sum_{j=1}^{p} \sum_{j^{\prime}=2}^{j} (n_{4j-2} - n_{4j-3})(n_{4j^{\prime} - 5} - n_{4j^{\prime} - 6}) - \sum_{j=1}^{p} \sum_{j^{\prime}=2}^{j} (n_{4j-2} - n_{4j-3})(n_{4j^{\prime} -4} - n_{4j^{\prime} - 5}) \\
& + \sum_{j=1}^{p} \sum_{j^{\prime} = 1}^{j} (n_{4j-1} - n_{4j-2})(n_{4j^{\prime} - 3} - n_{4j^{\prime} - 4}) - \sum_{j=1}^{p} \sum_{j^{\prime}=2}^{j} (n_{4j-3} - n_{4j-4})(n_{4j^{\prime}-4} - n_{4j^{\prime} - 5}) \\
& + \sum_{j=1}^{p} \Biggl( \sum_{j^{\prime}=1}^{j-1} (n_{4j^{\prime} - 1} - n_{4j^{\prime} - 3}) + \sum_{j=j^{\prime}}^{p} (n_{4j^{\prime} - 1} - n_{4j^{\prime} - 4}) \Biggr) n_{4j-4} + \sum_{j=1}^{p} \Biggl( \sum_{j^{\prime} = j}^{p} (n_{4j^{\prime} - 2} - n_{4j^{\prime} - 3} ) \Biggr) n_{4j-3} \\
& - \sum_{j=1}^{p-1} \Biggl( \sum_{j^{\prime} = j+1}^{p} (n_{4j^{\prime} - 1} - n_{4j^{\prime} - 4}) + n_{4j}\Biggr) n_{4j-1} \\
& - \sum_{j=1}^{p} \Biggl( \sum_{j^{\prime}=1}^{j} (n_{4j^{\prime} - 1} - n_{4j^{\prime} - 3}) + \sum_{j^{\prime}=j+1}^{p} (n_{4j^{\prime} - 2} - n_{4j^{\prime} - 3}) \Biggr) n_{4j-2}.
\end{aligned}
\end{equation*}

\section{Proofs of Theorems \ref{main1} and \ref{main2}}

\begin{proof}[Proof of Theorem \ref{main1}]
As (\ref{JNK-m-p}) reduces to (\ref{kppos}) when $m=1$ and this case was proven in \cite{takata}, we assume $m \geq 2$. Using Lemmas \ref{l8} and \ref{l9}, one can check that for $l=4mp-1$ and $t=4mp-2p-1$

\begin{equation} \label{a2}
a({\underline{n}}) = \sum_{j=1}^{2p - 1} (-1)^j n_{mj} - 1
\end{equation}

\noindent and $b_1({\underline{n}}) + b_2({\underline{n}})$ equals

\begin{equation*} 
\begin{aligned}
& 1 + n_{2mp - 1} + \sum_{j=1}^{p} \Biggl( \sum_{i=1}^{m-1} n_{2mj - 2m + i} - \sum_{i=m}^{2m-1} n_{2mj - 2m + i} \Biggr) \\
& + \sum_{j=1}^{p} \sum_{j^{\prime}=1}^{j} \sum_{k=1}^{m-1} \sum_{k^{\prime}=1}^{k} 
(n_{2mj - k} - n_{2mj - k - 1})(n_{2mj^{\prime} - 2m + k - k^{\prime} + 1} - n_{2mj^{\prime} - 2m + k - k^{\prime}}) \\
& - \sum_{j=1}^{p} \sum_{j^{\prime}=1}^{j} \sum_{k=1}^{m} \sum_{k^{\prime}=1}^{m-k+1} 
(n_{2mj - m - k + 1} - n_{2mj - m - k})(n_{2mj^{\prime} - 2m - k^{\prime} + 1} - n_{2mj^{\prime} - 2m - k^{\prime}}) \\
& + \sum_{s=1}^{m-1} \sum_{j=1}^{p} \Biggl( \sum_{j^{\prime} = 1}^{j-1} (n_{2mj^{\prime} - s} - n_{2mj^{\prime} - 2m + s}) + \sum_{j^{\prime}=j}^{p} (n_{2mj^{\prime} - s} - n_{2mj^{\prime} - 2m + s-1} ) \Biggr) n_{2mj - 2m + s-1} \\
\end{aligned}
\end{equation*}

\begin{equation}  \label{b1b2c2}  
\begin{aligned}
& + \sum_{j=1}^{p} \Biggl( \sum_{j^{\prime}=j}^{p} (n_{2mj^{\prime} - m} - n_{2mj^{\prime} - m -1}) \Biggr) n_{2mj - m - 1} \\
& - \sum_{j=1}^{p - 1} \Biggl( \sum_{j^{\prime} = j+1}^{p} (n_{2mj^{\prime} - 1} - n_{2mj^{\prime} - 2m} ) + n_{2mj} \Biggr) n_{2mj - 1} \hphantom{xxxxxxxxxxxxxxxxxxxxxxxxx} \\
& - \sum_{s=1}^{m-1} \sum_{j=1}^{p} \Biggl( \sum_{j^{\prime}=1}^{j} (n_{2mj^{\prime} - m + s} - n_{2mj^{\prime} - m - s}) + \sum_{j^{\prime}=j+1}^{p} (n_{2mj^{\prime} - m + s-1} - n_{2mj^{\prime} - m - s}) \Biggr) n_{2mj - m + s - 1}.
\end{aligned}
\end{equation}

\noindent Also, by (\ref{kappa}) and (\ref{tau})

\begin{equation} \label{twist2}
\begin{aligned}
X({\underline{n}}) & = (-1)^{n_{2mp - 1}} q^{-Nn_{2mp - 1}} \frac{(q)_{N-1} (q)_{n_{2mp - 1}}}{(q)_{N - n_{2mp - 1} - 1}} \\
& \times \prod_{j=1}^{p} \frac{(-1)^{n_{2mj - 1} - n_{2mj-m}} q^{\frac{1}{2} \sum\limits_{s=1}^{m} (n_{2mj-2m+s} - n_{2mj-2m+s-1})(n_{2mj-2m+s} - n_{2mj-2m+s-1} + 1)}}{\displaystyle \prod\limits_{s=1}^{2m-1} (q)_{n_{2mj-2m+s} - n_{2mj-2m+s-1}}} \\
& \times \prod_{j=1}^{p- 1} \frac{(-1)^{n_{2mj} - n_{2mj-1}}}{(q)_{n_{2mj} - n_{2mj-1}}}.
\end{aligned}
\end{equation}

\noindent Upon comparing (\ref{spt}) and (\ref{a2})--(\ref{twist2}) with (\ref{JNK-m-p}) and then simplifying, it suffices to prove that for $m \geq 2$
 
\begin{equation*}
\begin{aligned}
& \sum_{j=1}^{p} \sum_{j^{\prime}=1}^{j} \sum_{k=1}^{m-1} \sum_{k^{\prime}=1}^{k} 
(n_{2mj - k} - n_{2mj - k - 1})(n_{2mj^{\prime} - 2m + k - k^{\prime} + 1} - n_{2mj^{\prime} - 2m + k - k^{\prime}}) \\
& - \sum_{j=1}^{p} \sum_{j^{\prime}=1}^{j} \sum_{k=1}^{m} \sum_{k^{\prime}=1}^{m-k+1} 
(n_{2mj - m - k + 1} - n_{2mj - m - k})(n_{2mj^{\prime} - 2m - k^{\prime} + 1} - n_{2mj^{\prime} - 2m - k^{\prime}}) \\
& + \sum_{s=1}^{m-1} \sum_{j=1}^{p} \Biggl( \sum_{j^{\prime} = 1}^{j-1} (n_{2mj^{\prime} - s} - n_{2mj^{\prime} - 2m + s}) + \sum_{j^{\prime}=j}^{p} (n_{2mj^{\prime} - s} - n_{2mj^{\prime} - 2m + s-1} ) \Biggr) n_{2mj - 2m + s-1} \\
& + \sum_{j=1}^{p} \Biggl( \sum_{j^{\prime}=j}^{p} (n_{2mj^{\prime} - m} - n_{2mj^{\prime} - m -1}) \Biggr) n_{2mj - m - 1} \\
& - \sum_{j=1}^{p - 1} \Biggl( \sum_{j^{\prime} = j+1}^{p} (n_{2mj^{\prime} - 1} - n_{2mj^{\prime} - 2m} ) + n_{2mj} \Biggr) n_{2mj - 1} \\
\end{aligned}
\end{equation*}

\begin{equation} \label{lhs}
\begin{aligned}
& - \sum_{s=1}^{m-1} \sum_{j=1}^{p} \Biggl( \sum_{j^{\prime}=1}^{j} (n_{2mj^{\prime} - m + s} - n_{2mj^{\prime} - m - s}) + \sum_{j^{\prime}=j+1}^{p} (n_{2mj^{\prime} - m + s-1} - n_{2mj^{\prime} - m - s}) \Biggr) n_{2mj - m + s - 1} \\
& + \sum_{j=1}^{p} \Biggl[ \binom{n_{2mj-2m+1} + 1}{2} - n_{2mj-2m+1} n_{2mj-2m} + \binom{n_{2mj-m-1}}{2} - n_{2mj-m} n_{2mj-m-1} \\
& + \frac{1}{2} \sum_{s=2}^{m-1} (n_{2mj-2m+s} - n_{2mj-2m+s-1})(n_{2mj-2m+s} - n_{2mj-2m+s-1} + 1) \Biggr]
\end{aligned}
\end{equation}

\noindent equals

\begin{equation} \label{newrhs}
\displaystyle{\sum_{\substack{1 \leq i < j \leq 2mp-1 \\ m \nmid i}} \epsilon_{i,j,m} n_i n_j} - \displaystyle \sum_{i=1}^{2mp-2} n_i n_{i+1}
\end{equation} 

\noindent where $\epsilon_{i,j,m}$ is given by (\ref{epsilondef}). Here, we have used the fact that

\begin{equation} \label{s1}
\sum_{j=1}^{p} \Biggl( \sum_{i=1}^{m-1} n_{2mj - 2m + i} - \sum_{i=m}^{2m-2} n_{2mj - 2m+i} \Biggr) - \sum_{j=1}^{p-1} n_{2mj-1} =  \sum_{i=1}^{2mp-2} \gamma_{i,m} n_i + \sum_{i=1}^{p-1} n_{2mi},
\end{equation}

\noindent where $\gamma_{i,m}$ is given by (\ref{gammadef}), together with the identities

\begin{equation} \label{s2}
\begin{aligned}
& \frac{1}{2} \sum_{j=1}^{p} \sum_{s=1}^{m} (n_{2mj-2m+s} - n_{2mj-2m+s-1})(n_{2mj-2m+s} - n_{2mj-2m+s-1} + 1) \\
& = \sum_{j=1}^{p} \binom{n_{2mj - 2m}}{2} + \binom{n_{2mj-m} + 1}{2} \\
& + \sum_{j=1}^{p} \Biggl[ \binom{n_{2mj-2m+1} + 1}{2} - n_{2mj-2m+1} n_{2mj-2m} + \binom{n_{2mj-m-1}}{2} - n_{2mj-m} n_{2mj-m-1} \\
& + \frac{1}{2} \sum_{s=2}^{m-1} (n_{2mj-2m+s} - n_{2mj-2m+s-1})(n_{2mj-2m+s} - n_{2mj-2m+s-1} + 1) \Biggr] \hphantom{xxxxxxxxxx}
\end{aligned}
\end{equation}

\noindent and

\begin{equation} \label{s3}
\sum_{j=1}^{p} \binom{n_{2mj - 2m}}{2} + \binom{n_{2mj-m} + 1}{2} + \sum_{i=1}^{p-1} n_{2mi}= \sum_{i=1}^{2p-1} \binom{n_{mi} + 1}{2}.
\end{equation}

We now sketch how to proceed from \eqref{lhs} to \eqref{newrhs}.  Let $L_{i}$ denote the $i$th line of \eqref{lhs}. First, note that

\begin{equation} \label{l7l8}
L_7 + L_8 = \sum_{j=0}^{p-1} \sum_{i=1}^{m-1} n_{2mj+i}^2 - \sum_{j=0}^{p-1} \sum_{i=1}^{m} n_{2mj+i-1}n_{2mj+i}.
\end{equation}

\noindent Next, observe that both $L_1$ and $L_2$ simplify as the sum on $k'$ telescopes in each case. Thus,

\begin{equation} \label{newl1hat}
L_1 = \sum_{k=1}^{m-1} \sum_{j=1}^{p} \sum_{j'=1}^{j} (n_{2mj-k} - n_{2mj-k-1})(n_{2mj' - 2m + k} - n_{2mj' - 2m})
\end{equation}

\begin{equation} \label{newl2hat}
L_2 = - \sum_{k=1}^{m} \sum_{j=1}^{p} \sum_{j'=1}^{j} (n_{2mj - m - k +1} - n_{2mj - m- k})(n_{2mj'-2m} - n_{2mj' - 3m + k-1}).
\end{equation}

\noindent Now, splitting $L_1$ into two sums according to the second factor and then using the fact that the resulting second sum telescopes gives

\begin{align} 
L_1 &= \sum_{k=1}^{m-1} \sum_{j=1}^{p} \sum_{j'=1}^{j} (n_{2mj - k} - n_{2mj-k-1}) n_{2mj'-2m+k} + \sum_{j=1}^{p} \sum_{j'=1}^{j} (n_{2mj-m} - n_{2mj-1}) n_{2mj'-2m} \nonumber \\
&= \sum_{k=1}^{m-1} \sum_{j=1}^{p} \sum_{j'=1}^{j} n_{2mj - k}  n_{2mj'-2m+k} - \sum_{k=1}^{m-1} \sum_{j=1}^{p} \sum_{j'=1}^{j} n_{2mj-k-1} n_{2mj'-2m+k}  \nonumber \\
& + \sum_{j=1}^{p} \sum_{j'=1}^{j} n_{2mj-m}n_{2mj'-2m} - \sum_{j=1}^{p} \sum_{j'=1}^{j} n_{2mj-1} n_{2mj'-2m}. \label{newl1}
\end{align}

\noindent A similar simplification for \eqref{newl2hat} yields

\begin{align} 
L_2 &= - \sum_{j=1}^{p} \sum_{j'=1}^{j} (n_{2mj-m} - n_{2mj-2m})  n_{2mj'-2m}  \nonumber \\
&\qquad \qquad \qquad + \sum_{k=1}^{m} \sum_{j=1}^{p} \sum_{j'=1}^{j} (n_{2mj-m-k+1} - n_{2mj-m-k}) n_{2mj'-3m+k-1} \nonumber \\
&=  - \sum_{j=1}^{p} \sum_{j'=1}^{j} n_{2mj-m}n_{2mj'-2m} + \sum_{j=1}^{p} \sum_{j'=1}^{j}n_{2mj-2m}  n_{2mj'-2m} \nonumber \\
& + \sum_{k=1}^{m} \sum_{j=1}^{p} \sum_{j'=1}^{j} n_{2mj-m-k+1}n_{2mj'-3m+k-1} - \sum_{k=1}^{m} \sum_{j=1}^{p} \sum_{j'=1}^{j} n_{2mj-m-k} n_{2mj'-3m+k-1}. 
\label{newl2}
\end{align}

\noindent Observe that the third sum in \eqref{newl1} cancels with the first sum in \eqref{newl2}. Also, if we take $s=1$ in the second triple sum of $L_3$, 
\begin{equation*}
\sum_{s=1}^{m-1} \sum_{j=1}^{p}  \sum_{j^{\prime}=j}^{p} (n_{2mj^{\prime} - s} - n_{2mj^{\prime} - 2m + s-1} ) n_{2mj - 2m + s-1}, 
\end{equation*}
then this cancels with the fourth sum of \eqref{newl1} and the second sum of \eqref{newl2}.  Putting all of this together and expanding sums we have that \eqref{lhs} equals

\begin{equation}  \label{newlhs}
\begin{aligned}
& \sum_{k=1}^{m-1} \sum_{j=1}^{p} \sum_{j'=1}^{j} n_{2mj-k} n_{2mj' - 2m + k} - \sum_{k=1}^{m-1} \sum_{j=1}^{p} \sum_{j'=1}^{j} n_{2mj-k-1} n_{2mj' - 2m + k} \\
& + \sum_{k=1}^{m} \sum_{j=1}^{p} \sum_{j'=1}^{j} n_{2mj-m-k+1}n_{2mj'-3m+k-1} - \sum_{k=1}^{m} \sum_{j=1}^{p} \sum_{j'=1}^{j} n_{2mj-m-k} n_{2mj'-3m+k-1} \\
& + \sum_{s=1}^{m-1} \sum_{j=1}^{p} \sum_{j'=1}^{j-1} n_{2mj'-s}n_{2mj-2m+s-1} - \sum_{s=1}^{m-1} \sum_{j=1}^{p} \sum_{j'=1}^{j-1} n_{2mj'-2m+s} n_{2mj-2m+s-1} \\
& + \sum_{s=2}^{m-1} \sum_{j=1}^{p} \sum_{j'=j}^{p} n_{2mj'-s} n_{2mj-2m + s-1} - \sum_{s=2}^{m-1} \sum_{j=1}^{p} \sum_{j'=j}^{p} n_{2mj'-2m + s-1}  n_{2mj-2m + s-1} \\
& + \sum_{j=1}^{p}  \sum_{j^{\prime}=j}^{p} n_{2mj^{\prime} - m}n_{2mj - m - 1} - \sum_{j=1}^{p}  \sum_{j^{\prime}=j}^{p} n_{2mj^{\prime} - m -1}  n_{2mj - m - 1} \\
& - \sum_{j=1}^{p - 1} \sum_{j^{\prime} = j+1}^{p} n_{2mj^{\prime} - 1}n_{2mj - 1} + \sum_{j=1}^{p - 1} \sum_{j^{\prime} = j+1}^{p} n_{2mj^{\prime} - 2m}n_{2mj - 1} -  \sum_{j=1}^{p - 1}n_{2mj} n_{2mj - 1} \\
& - \sum_{s=1}^{m-1} \sum_{j=1}^{p}  \sum_{j^{\prime}=1}^{j} n_{2mj^{\prime} - m + s}n_{2mj - m + s - 1} +  \sum_{s=1}^{m-1} \sum_{j=1}^{p}  \sum_{j^{\prime}=1}^{j} n_{2mj^{\prime} - m - s}n_{2mj - m + s - 1}  \\
&-  \sum_{s=1}^{m-1} \sum_{j=1}^{p}\sum_{j^{\prime}=j+1}^{p} n_{2mj^{\prime} - m + s-1}n_{2mj - m + s - 1} + \sum_{s=1}^{m-1} \sum_{j=1}^{p}\sum_{j^{\prime}=j+1}^{p}n_{2mj^{\prime} - m - s} n_{2mj - m + s - 1} \\
& + \sum_{j=0}^{p-1} \sum_{i=1}^{m-1} n_{2mj+i}^2 - \sum_{j=0}^{p-1} \sum_{i=1}^{m} n_{2mj+i-1}n_{2mj+i}.
\end{aligned}
\end{equation}

\noindent The second sum on the fifth line of \eqref{newlhs} can now be taken into the second sum of the fourth line, increasing the upper limit of summation there to $m$.  In this sum, we can then exchange $j$ and $j'$ and reindex, giving
\begin{equation*}
-\sum_{s=2}^{m} \sum_{j=1}^{p} \sum_{j'=1}^{j} n_{2mj-2m + s-1}  n_{2mj'-2m + s-1}.
\end{equation*}
Now take out the term $j' = j$ and shift the indices in this term by $j \to j+1$ and $s \to s+1$ to cancel with the first sum on the last line  of \eqref{newlhs}. Finally, in the second line of \eqref{newlhs}, perform the shift $j' \to j'+1$ and start the sum at $j'=1$ (as $j'=0$ gives 0) to obtain

\begin{equation} \label{step}
\sum_{k=1}^{m} \sum_{j=1}^{p} \sum_{j'=1}^{j-1} n_{2mj-m-k+1} n_{2mj'-m+k-1}
- \sum_{k=1}^{m} \sum_{j=1}^{p} \sum_{j'=1}^{j-1} n_{2mj-m-k} n_{2mj'-m+k-1}.
\end{equation}

\noindent We now remove the $k=m$ term from the second sum in \eqref{step} and note that what remains cancels with the second sum in the penultimate line of \eqref{newlhs}.
In total, this yields that \eqref{lhs} equals

\begin{equation} \label{newlhs1}
\begin{aligned}
& \sum_{k=1}^{m-1} \sum_{j=1}^{p} \sum_{j'=1}^{j} n_{2mj-k}n_{2mj' - 2m + k} -\sum_{k=1}^{m-1} \sum_{j=1}^{p} \sum_{j'=1}^{j} n_{2mj-k-1} n_{2mj' - 2m + k} \\
& + \sum_{k=1}^{m} \sum_{j=1}^{p} \sum_{j'=1}^{j-1} n_{2mj-m-k+1} n_{2mj'-m + k -1} - \sum_{j=1}^{p} \sum_{j'=1}^{j-1} n_{2mj-2m} n_{2mj'-1} \\
& + \sum_{s=1}^{m-1} \sum_{j=1}^{p} \sum_{j'=1}^{j-1} n_{2mj'-s}n_{2mj-2m+s-1} - \sum_{s=1}^{m-1} \sum_{j=1}^{p} \sum_{j'=1}^{j-1} n_{2mj'-2m+s} n_{2mj-2m+s-1}  \\
&+ \sum_{s=2}^{m-1} \sum_{j=1}^{p} \sum_{j'=j}^{p} n_{2mj'-s} n_{2mj-2m + s-1}  - \sum_{s=2}^{m} \sum_{j=1}^{p} \sum_{j'=1}^{j-1} n_{2mj-2m+s-1} n_{2mj' - 2m + s-1} \\
& + \sum_{j=1}^{p} \sum_{j'=j}^{p} n_{2mj'-m} n_{2mj-m-1} \\
& - \sum_{j=1}^{p - 1} \sum_{j^{\prime} = j+1}^{p} n_{2mj^{\prime} - 1}n_{2mj - 1} + \sum_{j=1}^{p - 1} \sum_{j^{\prime} = j+1}^{p} n_{2mj^{\prime} - 2m}n_{2mj - 1} -  \sum_{j=1}^{p - 1}n_{2mj} n_{2mj - 1}  \\
& - \sum_{s=1}^{m-1} \sum_{j=1}^{p}  \sum_{j^{\prime}=1}^{j} n_{2mj^{\prime} - m + s}n_{2mj - m + s - 1} +  \sum_{s=1}^{m-1} \sum_{j=1}^{p}  \sum_{j^{\prime}=1}^{j} n_{2mj^{\prime} - m - s}n_{2mj - m + s - 1}  \\
&-  \sum_{s=1}^{m-1} \sum_{j=1}^{p}\sum_{j^{\prime}=j+1}^{p} n_{2mj^{\prime} - m + s-1}n_{2mj - m + s - 1}  \\ 
& - \sum_{j=0}^{p-1} \sum_{i=1}^{m} n_{2mj+i-1}n_{2mj+i}.
\end{aligned}
\end{equation}

\noindent We now simplify further.  The $k=1$ term of the first sum in the second line cancels with the $s=1$ term of the sum in the penultimate line.  The first sum on the fourth line cancels with the second sum of the first line once we remove the $k=m-1$ term. This $k=m-1$ term then cancels with the fifth line.  The first sum in the sixth line is the $s=m$ term of the sum in the penultimate line. The second sum in the sixth line cancels with the second sum in the second line. The last sum in the sixth line is the $i=0$ term in the last line.  Finally, we remove the $j'=j$ term from the first sum in the seventh line and write it in the last line. Thus, \eqref{lhs} equals

\begin{equation*} 
\begin{aligned}
& \sum_{k=1}^{m-1} \sum_{j=1}^{p} \sum_{j'=1}^{j} n_{2mj'-2m+k} n_{2mj-k} + \sum_{k=2}^{m} \sum_{j=1}^{p} \sum_{j'=1}^{j-1} n_{2mj'-m+k-1} n_{2mj-m-k+1} \hphantom{xxxxxxxxxx} \\
\end{aligned}
\end{equation*}

\begin{equation} \label{newlhs2}
\begin{aligned}
& + \sum_{s=1}^{m-1} \sum_{j=1}^{p} \sum_{j'=1}^{j-1} n_{2mj'-s} n_{2mj-2m + s-1} + \sum_{s=1}^{m-1} \sum_{j=1}^{p} \sum_{j'=1}^{j} n_{2mj'-m-s} n_{2mj-m + s-1} \\
& - \sum_{s=1}^{m-1} \sum_{j=1}^{p} \sum_{j'=1}^{j-1} n_{2mj'-2m+s} n_{2mj-2m + s-1} - \sum_{s=1}^{m-1} \sum_{j=1}^{p} \sum_{j'=1}^{j-1} n_{2mj'-m+s} n_{2mj-m+s-1} \\
& - \sum_{s=2}^{m} \sum_{j'=1}^{p} \sum_{j'=1}^{j-1} n_{2mj'-2m + s-1} n_{2mj-2m+s-1} - \sum_{s=2}^{m} \sum_{j=1}^{p-1} \sum_{j'=j+1}^{p} n_{2mj-m+s-1} n_{2mj'-m+s-1} \\
& - \sum_{s=1}^{m-1} \sum_{j=1}^{p} n_{2mj-m+s} n_{2mj-m+s-1} - \sum_{j=0}^{p-1} \sum_{i=0}^{m} n_{2mj+i-1} n_{2mj+i}.
\end{aligned}
\end{equation}

\noindent Now we see that this is equal to \eqref{newrhs} as follows.   The first four lines of \eqref{newlhs2} correspond to the first term in \eqref{newrhs}; namely, the first line of (\ref{newlhs2}) corresponds to $(i,j) \equiv (i,-i) \mod{2m}$, the second line to $(i,j) \equiv (i, -i-1) \mod{2m}$, the third line to $(i,j) \equiv (i,i-1) \mod{2m}$ and the fourth line to $(i,j) \equiv (i,i) \mod{2m}$. Finally, the fifth line of (\ref{newlhs2}) matches the second sum of \eqref{newrhs}. Thus, we have proven that \eqref{lhs} equals \eqref{newrhs}.

\end{proof}

\begin{proof}[Proof of Theorem \ref{main2}]
As (\ref{JNK-mp}) reduces to (\ref{kpneg}) when $m=1$ and this case was proven in \cite{takata}, we assume $m \geq 2$. Using Lemmas \ref{l6} and \ref{l7}, one can check that for $l=4mp+1$ and $t=4mp-2p+1$

\begin{equation} \label{a1}
a({\underline{n}}) = - \sum_{j=1}^{2p - 1} (-1)^j n_{mj}
\end{equation}

\noindent and $b_1({\underline{n}}) + b_2({\underline{n}})$ equals

\begin{equation*}
\begin{aligned}
& \sum_{j=1}^{p} \Biggl[ \sum_{i=0}^{m-1} n_{2mj - 2m + i} - \sum_{i=m+1}^{2m-1} n_{2mj-2m+i} \Biggr] \\
& + \sum_{k=1}^{m} \sum_{j=1}^{p} \sum_{j^{\prime}=1}^{j} (n_{2mj-k+1} - n_{2mj-k})(n_{2mj^{\prime} - 2m + k} - n_{2mj^{\prime} - 2m})  \hphantom{xxxxxxxxxxxxxxxxxxxxxxxxxxxxxxxxxxxx} \\
& - \sum_{s=1}^{m-1} \sum_{j=2}^{p} \sum_{j^{\prime}=2}^{j} (n_{2mj - m - s +1} - n_{2mj-m-s})(n_{2mj^{\prime} - 2m} - n_{2mj^{\prime} - 3m+s}) \\
& + \sum_{j=1}^{p} \Biggl( \sum_{j^{\prime}=j}^{p} (n_{2mj^{\prime}} - n_{2mj^{\prime} - 2m + 1}) + n_{2mj - 2m +1} \Biggr) n_{2mj-2m} \\
& + \sum_{s=1}^{m-1} \sum_{j=1}^{p} \Biggl( \sum_{j^{\prime}=1}^{j} (n_{2mj^{\prime} - s} - n_{2mj^{\prime} - 2m + s}) + \sum_{j^{\prime}=j+1}^{p} (n_{2mj^{\prime} - s} - n_{2mj^{\prime} - 2m + s + 1}) \Biggr) n_{2mj-2m+s} \\
\end{aligned}
\end{equation*}

\begin{equation} \label{b1b2c1}
\begin{aligned}
& - \sum_{j=1}^{p} \Biggl( \sum_{j^{\prime}=j}^{p} (n_{2mj^{\prime} - m + 1} - n_{2mj^{\prime} - m}) \Biggr) n_{2mj-m} \\
& - \sum_{s=1}^{m-1} \sum_{j=1}^{p} \Biggl( \sum_{j^{\prime}=1}^{j-1} (n_{2mj^{\prime} - m + s} - n_{2mj^{\prime} - m - s}) + \sum_{j^{\prime}=j}^{p} (n_{2mj^{\prime} - m + s + 1} - n_{2mj^{\prime} - m - s} ) \Biggr) n_{2mj-m+s}. 
\end{aligned}
\end{equation}

\noindent Also, by (\ref{kappa}) and (\ref{tau})

\begin{equation} \label{twist1}
\begin{aligned}
X({\underline{n}}) & = (-1)^{n_{2mp}} q^{-Nn_{2mp}} \frac{(q)_{N-1} (q)_{n_{2mp}}}{(q)_{N - n_{2mp} - 1}} \\& \times \prod_{j=1}^{p} \frac{(-1)^{n_{2mj} - n_{2mj-m}} q^{\frac{1}{2} \sum\limits_{s=1}^{m} (n_{2mj-2m+s} - n_{2mj-2m+s-1})(n_{2mj-2m+s} - n_{2mj-2m+s-1} + 1)}}{\displaystyle \prod\limits_{s=1}^{2m} (q)_{n_{2mj-2m+s} - n_{2mj-2m+s-1}}}.
\end{aligned}
\end{equation}

\noindent Upon comparing (\ref{spt}) and (\ref{a1})--(\ref{twist1}) with (\ref{JNK-mp}) and then simplifying, it suffices to prove that for $m \geq 2$

\begin{equation*}  
\begin{aligned}
& \sum_{k=1}^{m} \sum_{j=1}^{p} \sum_{j^{\prime}=1}^{j} (n_{2mj-k+1} - n_{2mj-k})(n_{2mj^{\prime} - 2m + k} - n_{2mj^{\prime} - 2m}) \\
& - \sum_{s=1}^{m-1} \sum_{j=2}^{p} \sum_{j^{\prime}=2}^{j} (n_{2mj - m - s +1} - n_{2mj-m-s})(n_{2mj^{\prime} - 2m} - n_{2mj^{\prime} - 3m+s}) \\
& + \sum_{j=1}^{p} \sum_{j^{\prime}=j}^{p} (n_{2mj^{\prime}} - n_{2mj^{\prime} - 2m + 1})n_{2mj-2m} + \sum_{j=1}^{p}n_{2mj - 2m +1}n_{2mj-2m} \\
& + \sum_{s=1}^{m-1} \sum_{j=1}^{p} \Biggl( \sum_{j^{\prime}=1}^{j} (n_{2mj^{\prime} - s} - n_{2mj^{\prime} - 2m + s}) + \sum_{j^{\prime}=j+1}^{p} (n_{2mj^{\prime} - s} - n_{2mj^{\prime} - 2m + s + 1}) \Biggr) n_{2mj-2m+s} \\
& - \sum_{j=1}^{p} \Biggl( \sum_{j^{\prime}=j}^{p} (n_{2mj^{\prime} - m + 1} - n_{2mj^{\prime} - m}) \Biggr) n_{2mj-m} \\
& - \sum_{s=1}^{m-1} \sum_{j=1}^{p} \Biggl( \sum_{j^{\prime}=1}^{j-1} (n_{2mj^{\prime} - m + s} - n_{2mj^{\prime} - m - s}) + \sum_{j^{\prime}=j}^{p} (n_{2mj^{\prime} - m + s + 1} - n_{2mj^{\prime} - m - s} ) \Biggr) n_{2mj-m+s} \\
& + \sum_{j=1}^{p} \Biggl[ \binom{n_{2mj-2m+1} + 1}{2} - n_{2mj-2m+1} n_{2mj-2m} + \binom{n_{2mj-m-1}}{2} - n_{2mj-m} n_{2mj-m-1} \\
\end{aligned}
\end{equation*}

\begin{equation}  \label{lhs2}
\begin{aligned}
& + \frac{1}{2} \sum_{s=2}^{m-1} (n_{2mj-2m+s} - n_{2mj-2m+s-1})(n_{2mj-2m+s} - n_{2mj-2m+s-1} + 1) \Biggr]
\end{aligned}
\end{equation}

\noindent equals 

\begin{equation} \label{Deltasum}
\displaystyle \sum_{\substack{1 \leq i < j \leq 2mp \\ m \nmid i}} \Delta_{i,j,m} n_i n_j
\end{equation}

\noindent where $\Delta_{i,j,m}$ is given by (\ref{Deltadef}). Here, we have used (\ref{s2}), (\ref{s3}) and the fact that

\begin{equation}
\sum_{j=1}^{p} \Biggl[ \sum_{i=0}^{m-1} n_{2mj - 2m + i} - \sum_{i=m+1}^{2m-1} n_{2mj-2m+i} \Biggr] = \sum_{i=1}^{2mp-1} \beta_{i,m} n_i + \sum_{i=1}^{p-1} n_{2mi}
\end{equation}

\noindent where $\beta_{i,m}$ is given by (\ref{betadef}).

We now sketch how to go from \eqref{lhs2} to \eqref{Deltasum}.
Let $\hat{L}_i$ denote the $i$th line of \eqref{lhs2}. We first split $\hat{L}_1$ into two parts according to the second factor and note that the sum on $k$ in the second part telescopes. Thus,

\begin{equation} \label{l1}
\hat{L}_1 = \sum_{k=1}^{m} \sum_{j=1}^{p} \sum_{j'=1}^{j} (n_{2mj-k+1} - n_{2mj-k}) n_{2mj' - 2m + k} + \sum_{j=1}^{p} \sum_{j'=1}^{j} (n_{2mj-m} - n_{2mj}) n_{2mj'-2m}.
\end{equation}

\noindent Similarly, we split $\hat{L}_2$ into two parts according to the second factor and note that the sum on $s$ in the first part telescopes. Thus,

\begin{equation} \label{l2}
\begin{aligned}
\hat{L}_2 = - \sum_{j=1}^{p} \sum_{j'=1}^{j}  (n_{2mj-m} - n_{2mj-2m+1}) & n_{2mj' - 2m} \\
& + \sum_{s=1}^{m-1} \sum_{j=2}^{p} \sum_{j'=2}^{j} (n_{2mj-m-s+1} - n_{2mj-m-s}) n_{2mj-3m+s}.
\end{aligned}
\end{equation}

\noindent Here, we have used the fact that $n_0:=0$. Now, the $k=m$ term of the first sum in (\ref{l1}) cancels with $\hat{L}_5$.  If we combine the double sum of (\ref{l1}) with the double sum in $\hat{L}_3$,  then the resulting sum cancels with the first sum in (\ref{l2}).    Note that $\hat{L}_i = L_i$ for $i=7$ and $8$.  Hence, the single sum in $\hat{L}_3$ cancels with the $i=1$ term of the second sum in (\ref{l7l8}). Putting this together and expanding sums we now have that \eqref{lhs2} equals

\begin{equation} \label{lhs2new}
\begin{aligned}
& \sum_{k=1}^{m-1} \sum_{j=1}^{p} \sum_{j^{\prime}=1}^{j} n_{2mj-k+1}n_{2mj^{\prime} - 2m + k} -\sum_{k=1}^{m-1} \sum_{j=1}^{p} \sum_{j^{\prime}=1}^{j} n_{2mj-k}n_{2mj^{\prime} - 2m + k}
\\
& + \sum_{s=1}^{m-1} \sum_{j=2}^{p} \sum_{j^{\prime}=2}^{j} n_{2mj - m - s +1}n_{2mj^{\prime} - 3m+s} - \sum_{s=1}^{m-1} \sum_{j=2}^{p} \sum_{j^{\prime}=2}^{j}n_{2mj-m-s} n_{2mj^{\prime} - 3m+s} \\
& + \sum_{s=1}^{m-1} \sum_{j=1}^{p}  \sum_{j^{\prime}=1}^{j} n_{2mj^{\prime} - s}n_{2mj-2m+s} - \sum_{s=1}^{m-1} \sum_{j=1}^{p}  \sum_{j^{\prime}=1}^{j} n_{2mj^{\prime} - 2m + s}n_{2mj-2m+s} \\ 
&+ \sum_{s=1}^{m-1} \sum_{j=1}^{p}\sum_{j^{\prime}=j+1}^{p} n_{2mj^{\prime} - s}n_{2mj-2m+s} - \sum_{s=1}^{m-1} \sum_{j=1}^{p}\sum_{j^{\prime}=j+1}^{p}n_{2mj^{\prime} - 2m + s + 1} n_{2mj-2m+s} \\
& - \sum_{s=1}^{m-1} \sum_{j=1}^{p} \sum_{j^{\prime}=1}^{j-1} n_{2mj^{\prime} - m + s}n_{2mj-m+s} + \sum_{s=1}^{m-1} \sum_{j=1}^{p} \sum_{j^{\prime}=1}^{j-1} n_{2mj^{\prime} - m - s}n_{2mj-m+s} \\
&- \sum_{s=1}^{m-1} \sum_{j=1}^{p} \sum_{j^{\prime}=j}^{p} n_{2mj^{\prime} - m + s + 1}n_{2mj-m+s} + \sum_{s=1}^{m-1} \sum_{j=1}^{p} \sum_{j^{\prime}=j}^{p}n_{2mj^{\prime} - m - s}n_{2mj-m+s} \\
& + \sum_{j=0}^{p-1} \sum_{i=1}^{m-1} n_{2mj+i}^2 - \sum_{j=0}^{p-1} \sum_{i=2}^{m} n_{2mj+i-1}n_{2mj+i}.
\end{aligned}
\end{equation}

We combine the $j'=j$ term from the first sum of the third line in (\ref{lhs2new}) with the first sum on the fourth line, and then cancel this with the second sum in the first line.    Next, the $j'=j$ term in the second sum of the third line cancels with the first sum in the last line. Thus, \eqref{lhs2} equals

\begin{equation*} 
\begin{aligned}
& \sum_{k=1}^{m-1} \sum_{j=1}^{p} \sum_{j^{\prime}=1}^{j} n_{2mj-k+1}n_{2mj^{\prime} - 2m + k}
\\
& + \sum_{s=1}^{m-1} \sum_{j=2}^{p} \sum_{j^{\prime}=2}^{j} n_{2mj - m - s +1}n_{2mj^{\prime} - 3m+s} - \sum_{s=1}^{m-1} \sum_{j=2}^{p} \sum_{j^{\prime}=2}^{j}n_{2mj-m-s} n_{2mj^{\prime} - 3m+s} \\
& + \sum_{s=1}^{m-1} \sum_{j=1}^{p}  \sum_{j^{\prime}=1}^{j-1} n_{2mj^{\prime} - s}n_{2mj-2m+s} - \sum_{s=1}^{m-1} \sum_{j=1}^{p}  \sum_{j^{\prime}=1}^{j-1} n_{2mj^{\prime} - 2m + s}n_{2mj-2m+s} \\ 
& - \sum_{s=1}^{m-1} \sum_{j=1}^{p}\sum_{j^{\prime}=j+1}^{p}n_{2mj^{\prime} - 2m + s + 1} n_{2mj-2m+s} \\
& - \sum_{s=1}^{m-1} \sum_{j=1}^{p} \sum_{j^{\prime}=1}^{j-1} n_{2mj^{\prime} - m + s}n_{2mj-m+s} + \sum_{s=1}^{m-1} \sum_{j=1}^{p} \sum_{j^{\prime}=1}^{j-1} n_{2mj^{\prime} - m - s}n_{2mj-m+s} \\
\end{aligned}
\end{equation*}

\begin{equation}  \label{lhs2new2}
\begin{aligned}
&- \sum_{s=1}^{m-1} \sum_{j=1}^{p} \sum_{j^{\prime}=j}^{p} n_{2mj^{\prime} - m + s + 1}n_{2mj-m+s} + \sum_{s=1}^{m-1} \sum_{j=1}^{p} \sum_{j^{\prime}=j}^{p}n_{2mj^{\prime} - m - s}n_{2mj-m+s} \\
& - \sum_{j=0}^{p-1} \sum_{i=2}^{m} n_{2mj+i-1}n_{2mj+i}.
\end{aligned}
\end{equation}

Now, the last line of (\ref{lhs2new2}) is just the $j'=j$ term of the fourth line.   In the second line, perform the shift $j' \to j'+1$ and start the sum at $j=1$. The second sum on this line then cancels with the second sum of penultimate line, except for the $j'=j$ term.  But this term now becomes the $j'=j$ term for the second sum in the fifth line. After simplifying and gathering terms, we have

\begin{equation} \label{lhs2new4}
\begin{aligned}
& \sum_{k=1}^{m-1} \sum_{j=1}^{p} \sum_{j^{\prime}=1}^{j} n_{2mj^{\prime} - 2m + k}n_{2mj-k+1} + \sum_{s=1}^{m-1} \sum_{j=1}^{p} \sum_{j^{\prime}=1}^{j-1} n_{2mj^{\prime} - m+s}n_{2mj - m - s +1} \\
& + \sum_{s=1}^{m-1} \sum_{j=1}^{p}  \sum_{j^{\prime}=1}^{j-1} n_{2mj^{\prime} - s}n_{2mj-2m+s} + \sum_{s=1}^{m-1} \sum_{j=1}^{p} \sum_{j'=1}^{j} n_{2mj^{\prime} - m - s}n_{2mj-m+s} \\
& - \sum_{s=1}^{m-1} \sum_{j=1}^{p} \sum_{j^{\prime}=1}^{j-1} n_{2mj^{\prime} - 2m + s}n_{2mj-2m+s} - \sum_{s=1}^{m-1} \sum_{j=1}^{p} \sum_{j^{\prime}=1}^{j-1} n_{2mj^{\prime} - m + s}n_{2mj-m+s} \\
& - \sum_{s=1}^{m-1} \sum_{j=1}^{p}   \sum_{j^{\prime}=1}^{j} n_{2mj^{\prime}-2m+s} n_{2mj - 2m + s + 1}  - \sum_{s=1}^{m-1} \sum_{j=1}^{p}  \sum_{j^{\prime}=1}^{j} n_{2mj^{\prime}-m+s} n_{2mj - m + s + 1} .
\end{aligned}
\end{equation}

Now we see that this is equal to \eqref{Deltasum} as follows. Namely, the first line of (\ref{lhs2new4}) corresponds to $(i,j) \equiv (i,-i+1)$ mod $2m$, the second line to $(i,j) \equiv (i,-i)$ mod $2m$, the third line to $(i,j) \equiv (i,i)$ mod $2m$ and the fourth line to $(i,j) \equiv (i,i+1)$ mod $2m$. This completes the proof that \eqref{lhs2} is equal to \eqref{Deltasum}.
\end{proof}

\section{Concluding remarks and questions}
The work in this paper may be compared with that of Hikami and the first author in \cite{hikami1,hikami2,hl1}, where one finds generalized Kontsevich-Zagier functions $F_t(q)$ and generalized $U$-functions $U_t(x;q)$ for torus knots $T_{(2,2t+1)}$. In the context of this family of torus knots, we have
\begin{equation} \label{Ft}
F_t(q) := 
  q^{t}
  \sum_{k_t \geq \dots \geq k_1 \geq 0}^\infty
  (q)_{k_t} \, 
  \prod_{i=1}^{t-1} q^{k_i(k_i+1)} 
  \begin{bmatrix}
    k_{i+1} \\
    k_i
  \end{bmatrix} 
\end{equation} 
and 
\begin{equation} \label{Ut}
U_t(x;q) :=
  q^{-t}
  \sum_{k_t  \geq \cdots \geq k_1 \geq 1}
  (-xq)_{k_t-1}(-x^{-1}q)_{k_t-1} \,
  q^{k_t}\prod_{i=1}^{t-1} q^{k_i^2}
  \begin{bmatrix}
    k_{i+1} + k_i - i   + 2\sum_{j=1}^{i-1}k_j \\
    k_{i+1} - k_i
  \end{bmatrix}.
\end{equation}
(Note that $F_1(q) = F_{1,1}(q)$ and $U_1(x;q) = U_{1,1}(x;q)$ since the underlying knot in each case is the trefoil).
While both the torus knot and double twist knot families of functions satisfy the duality in Theorem \ref{fugeneralthm}, much more is known in the case of torus knots.   For example, the functions $F_t(q)$ have explicit quantum modular properties which were given by Hikami \cite{hikami2}. For the case of torus knots $T_{(3, 2^t)}$, see Corollary 4.1 in \cite{go}. As for (\ref{Ut}), it can be written in terms of indefinite ternary theta series \cite{hl1}. It is natural to ask whether the $F_{m,p}(q)$ (and/or $\mathcal{F}_{m,p}(q)$) have quantum modularity or other related properties (e.g., asymptotic expansions near roots of unity), and whether the $U_{m,p}(x;q)$ (and/or the $\mathcal{U}_{m,p}(x;q)$) have any nice representation in terms of indefinite theta series. 

We close with two further questions.   First, for torus knots $T_{(2, 2t+1)}$, the $q$-hypergeometric series expressions for $J_N(T_{(2,2t+1)};q)$ which led to the generalized Kontsevich-Zagier functions $F_t(q)$  were computed in \cite{hikami1, hikami2} using difference equations.  Can one prove  Theorems \ref{main1} and \ref{main2} using this technique?   Second,  both $U_{1,1}(x; q)$ and $F_{1,1}(q)$ are interesting combinatorial generating functions and the coefficients of $F_{1,1}(1-q)$ and $U_{1,1}(1; q)$ satisfy intriguing congruences \cite{ak, as, bopr, garvan, gkr, straub}.  It would be worthwhile to determine if the same is true for $U_{m,p}(x; q)$ and $F_{m,p}(q)$.  

\section*{Acknowledgements}
The authors would like to thank the Mathematisches Forschungsinstitut Oberwolfach for their support as this work began during their stay from March 13-26, 2016 as part of the Research in Pairs program. The second author would like to thank Kazihuro Hikami, Thang L{\^e}, Kate Petersen and Anh Tran for their helpful comments and suggestions.

\end{document}